\newcommand{\bel}[1]{\begin{equation}\label{#1}}
\newcommand{\be}{\begin{equation}}
\newcommand{\ba}{\begin{eqnarray}}
\newcommand{\ea}{\end{eqnarray}}
\newcommand{\qe}{\end{equation}}
\newcommand{\R}{{\mathbb R}}
\newcommand{\Z}{{\mathbb Z}}
\newcommand{\dd}{\mathrm{d}}
\newcommand{\Hmm}[1]{\leavevmode{\marginpar{\tiny%
$\hbox to 0mm{\hspace*{-0.5mm}$\leftarrow$\hss}%
\vcenter{\vrule depth 0.1mm height 0.1mm width \the\marginparwidth}%
\hbox to
0mm{\hss$\rightarrow$\hspace*{-0.5mm}}$\\\relax\raggedright #1}}}
\theoremstyle{plain}
\newtheorem{thm}{Theorem}[section]
\newtheorem{lemma}[thm]{Lemma}
\newtheorem{coro}[thm]{Corollary}
\newtheorem{prop}[thm]{Proposition}
\theoremstyle{definition}
\newtheorem{defn}[thm]{Definition}
\newtheorem{rem}[thm]{Remark}
\numberwithin{equation}{section}
\begin{document}

\date{\today}
\title[Payne-Polya-Weinberger, Hile-Protter and Yang's inequalities on lattices]{Payne-Polya-Weinberger, Hile-Protter and Yang's inequalities for Dirichlet Laplace eigenvalues on integer lattices}
\author{Bobo Hua}
\email{bobohua@fudan.edu.cn}
\address{School of Mathematical Sciences, LMNS, Fudan University, Shanghai 200433, China.}

\author{Yong Lin}
\email{linyong01@ruc.edu.cn}
\address{Department of Mathematics, Information School,
Renmin University of China,
Beijing 100872, China}

\author{Yanhui Su}
\email{suyh@fzu.edu.cn}
\address{College of Mathematics and Computer Science, Fuzhou University, Fuzhou 350116, China}

\begin{abstract}
In this paper, we prove some analogues of Payne-Polya-Weinberger, Hile-Protter and Yang's inequalities for Dirichlet (discrete) Laplace eigenvalues on any subset in the integer lattice $\Z^n.$ This partially answers a question posed by Chung and Oden \cite{ChuOd00}.

 \end{abstract}
\maketitle


\section{Introduction}

The eigenvalue problem of the Laplace operator with Dirichlet boundary condition, called Dirichlet Laplacian in short, on a bounded domain $\Omega\subset\R^n$ has been extensively studied in the literature, see e.g. \cite{CouHil53,Chavel84,SY94}. We denote by
$$0<\lambda_1<\lambda_2\leq\lambda_3\leq\cdots\ \uparrow \infty$$ the spectrum of Dirichlet Laplacian on $\Omega$, counting the multiplicity of eigenvalues. In 1911, Weyl \cite{Weyl} proved that
$$\lambda_k\sim\frac{4\pi^2}{\left(\omega_n\mathrm{vol}(\Omega)\right)^{\frac{2}{n}}}k^{\frac{2}{n}},\quad k\rightarrow\infty,$$
where $\omega_n$ is the volume of the unit ball in $\R^n$ and $\mathrm{vol}(\Omega)$ is the volume of $\Omega$. Furthermore, P\'{o}lya \cite{Pol61} conjectured the eigenvalues $\lambda_k$ would satisfy
$$\lambda_k\geq\frac{4\pi^2}{(\omega_n\mathrm{vol}(\Omega))^{\frac{2}{n}}}k^{\frac{2}{n}},\quad k=1,2,3,\cdots.$$
Li and Yau \cite{LY83} proved that
$$\lambda_k\geq\frac{n}{n+2}\frac{4\pi^2}{(\omega_n\mathrm{vol}(\Omega))^{\frac{2}{n}}}k^{\frac{2}{n}},\quad
k=1,2,3,\cdots.$$

For the gaps between consecutive eigenvalues of Dirichlet Laplacian, it was first proved by Payne, Polya and Weinberger \cite{PPW} for a bounded domain in $\R^2$ and was then generalized to $\R^n$ by Thompson \cite{Thomp69} that for any $k\geq 1,$
$$\lambda_{k+1}-\lambda_k\leq\frac{4}{nk}\sum_{i=1}^k\lambda_i,$$ which is now called Payne-Polya-Weinberger inequality. Later, Hile and Protter \cite{HP80} obtained the so-called Hile-Protter inequality that
$$\sum_{i=1}^k\frac{\lambda_i}{\lambda_{k+1}-\lambda_i}\geq\frac{kn}{4}.$$

A sharp inequality was proved by Yang \cite{Yang91,CY07} that
\begin{equation}\label{eq:Yangint1}\sum_{i=1}^k(\lambda_{k+1}-\lambda_i)^2\leq\frac{4}{n}\sum_{i=1}^k\lambda_i(\lambda_{k+1}-\lambda_i),\end{equation} which implies
\begin{equation}\label{eq:Yangint2}\lambda_{k+1}\leq\left(1+\frac{4}{n}\right)\frac1k\sum_{i=1}^k\lambda_i.\end{equation}
These inequalities, \eqref{eq:Yangint1} and \eqref{eq:Yangint2}, are called Yang's first and second inequalities respectively, see \cite{AshBen96,HarStu97,Ash99,Ash02}. It is well-known, see e.g. \cite{Ash99}, that Yang's first inequality implies Yang's second inequality; the latter yields Hile-Protter inequality; Hile-Protter inequality is stronger than Payne-Polya-Weinberger inequality. We say these inequalities are universal since they apply to all bounded domains $\Omega\subset \R^n.$ Universal inequalities for eigenvalues of Dirichlet Laplacians in $\R^n$ and their generalizations to general manifolds have been studied by many authors, see e.g. \cite{Li80,YangYau80,Leung91,Har93,HarMich94,HarStu97,ChengYang05,ChengYang06,Har07,ChenCheng08,SCY08,ChengYang09,EHI09,CZL12,CZY16}

The Dirichlet Laplacian on a finite subset of a graph has been investigated in the literature of discrete analysis, see e.g. \cite{Dod84,Frie93,CouGri98,ChungYau00,BHJ14} and many others. In this paper, we consider eigenvalue problems for Dirichlet Laplacians on integer lattices. First, we recall some basic notions of Laplace operators on discrete spaces, i.e. on graphs. Let $(V,E)$ be a simple, undirected, locally finite graph with the set of vertices $V$ and the set of edges $E$ which has no isolated vertices. Two vertices $x,y$ are called neighbors, denoted by $x\sim y$, if there is an edge connecting them. The degree of a vertex $x,$ denoted by $\dd_x$ is defined as the number of neighbors of $x.$ The (discrete) Laplacian $\Delta$ on $(V,E)$ is defined as
$$\Delta f(x):=\frac{1}{d_x}\sum_{y\in V: y\sim x}f(y)-f(x),\quad \forall f:V\to\R.$$
Let $\Omega$ be a finite subset of $V.$ We define the vertex boundary of $\Omega$ as
$$\delta\Omega=\{y\in V\setminus \Omega:y\sim x\ \mbox{for some}\ x\in\Omega\}.$$ We denote by $\ell^2(\Omega,d)$ the Hilbert space of all functions defined on $\Omega$ equipped with the inner product, $$\langle f,g \rangle:=\sum_{x\in \Omega} f(x)g(x)\dd_x,\quad f,g:\Omega\to\R.$$
The Laplacian with Dirichlet boundary condition on $\Omega,$ denoted by $\Delta_{\Omega},$ is defined as, for any $f:\Omega\to \R$
$$\Delta_{\Omega} f(x):=\Delta \widetilde{f}(x),\ \ x\in \Omega,$$ where $\widetilde{f}$ is the null extension of $f$ to $V,$ i.e. $\widetilde{f}(x)=f(x),\ \forall x\in \Omega$ and $\widetilde{f}(x)=0,\ \forall x\in V\setminus\Omega.$ We will simply write $\Delta$ for $\Delta_{\Omega}$ if the subset $\Omega$ is clear in the context. One is ready to check that $\Delta_{\Omega}$ is a self-adjoint operator on $\ell^2(\Omega,d),$ see e.g. \cite{BHJ14}.  Moreover, $\lambda$ is an eigenvalue of $-\Delta_{\Omega}$ if and only if there is a function $u:\Omega\cup\delta\Omega\to\R$ such that
\begin{eqnarray}
\left\{\begin{array}{ll}
-\Delta u=\lambda u,&\mbox{in}\ \Omega,\\
u=0,&\mbox{on}\ \delta\Omega.
\end{array}\right.
\end{eqnarray}
 For a finite subset $\Omega,$ we write the spectrum of the Dirichlet Laplacian on $\Omega,$ i.e. eigenvalues of $-\Delta_{\Omega},$ as
$$0<\lambda_1\leq\lambda_2\leq\cdots\leq \lambda_N\leq 2$$ where $N:=\sharp\Omega$ denotes the number of vertices in $\Omega.$ Note that by the definition of the Laplacian all the eigenvalues are bounded above by $2.$


The $n$-dimensional integer lattice graph, denoted by $\Z^n$, is of particular interest which serves as the discrete counterpart of $\R^n,$ see Section~\ref{s:pre} for the definition. Since $\Z^n$ is bipartite, the spectrum of the Dirichlet Laplacian on any finite subset of $\Z^n$ is symmetric with respect to $1$, i.e. \begin{equation}\label{eq:bip}\lambda_k=2-\lambda_{N+1-k},\quad \forall 1\leq k\leq N,\end{equation} see \cite[Lemma~3.4]{BHJ14}. In particular, this implies that for any $k\leq N,$ $$\sum_{i=1}^k(1-\lambda_i)\geq 0,$$ see Proposition~\ref{prop:bipartite}.

Chung and Oden \cite[pp. 268]{ChuOd00} proposed a question whether one can generalize Payne-Polya-Weinberger inequality to the discrete setting, i.e. for eigenvalues of Dirichlet Laplacian on subsets in $\Z^n.$  In this paper, following the proof strategies in the continuous setting, we prove a discrete analogue of Payne-Polya-Weinberger inequality in $\Z^n,$ which partially answers the above question.


\begin{thm}[Payne-Polya-Weinberger type inequality]\label{thm:PPW} Let $\Omega$ be a finite subset of $\Z^n$ and $\lambda_i$
be the $i$-th eigenvalue of the Dirichlet Laplacian on $\Omega.$ Then for any $1\leq k\leq \sharp\Omega-1,$
\begin{equation}\label{thm:PPWin}\tag{Payne-Polya-Weinberger}\lambda_{k+1}-\lambda_{k}\leq \frac{4}{n}\frac{\sum_{i=1}^k\lambda_i}{\sum_{i=1}^k(1-\lambda_i)}.\end{equation} 

\end{thm}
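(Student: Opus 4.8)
The plan is to adapt the classical commutator/test-function method of Payne--P\'olya--Weinberger to the lattice, with two genuinely discrete modifications. Fix an orthonormal basis $u_1,\dots,u_N$ of eigenfunctions, $-\Delta u_i=\lambda_i u_i$, extended by $0$ off $\Omega$. Writing $x_\alpha$ for the $\alpha$-th coordinate function and $\partial_\alpha f(x):=\tfrac12(f(x+e_\alpha)-f(x-e_\alpha))$ for the symmetric difference, the starting point is the exact commutator identity $\Delta(x_\alpha f)=x_\alpha\Delta f+\tfrac1n\partial_\alpha f$ on $\Z^n$ (every vertex has degree $2n$), which replaces $[\Delta,x_\alpha]=2\partial_\alpha$. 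For each $i\le k$ and each $\alpha$ I would form the trial function $\phi_{i\alpha}=x_\alpha u_i-\sum_{j\le k}a_{ij}^\alpha u_j$ with $a_{ij}^\alpha=\langle x_\alpha u_i,u_j\rangle$, so that $\phi_{i\alpha}\perp u_1,\dots,u_k$. Feeding $\phi_{i\alpha}$ into the min-max characterization of $\lambda_{k+1}$ and using the commutator identity together with self-adjointness gives, for each $i,\alpha$,
\[(\lambda_{k+1}-\lambda_i)\,\|\phi_{i\alpha}\|^2\le-\tfrac1n\langle\partial_\alpha u_i,\phi_{i\alpha}\rangle.\]

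The first discrete phenomenon appears when evaluating the right-hand side. Since the discrete Leibniz rule fails, $\partial_\alpha(x_\alpha u_i)=x_\alpha\partial_\alpha u_i+A_\alpha u_i$, where $A_\alpha f(x)=\tfrac12(f(x+e_\alpha)+f(x-e_\alpha))$ is the symmetric average; summation by parts (the operator $\partial_\alpha$ is skew-adjoint for the unweighted sum) then yields the clean identity $\langle\partial_\alpha u_i,x_\alpha u_i\rangle=-\tfrac12\langle u_i,A_\alpha u_i\rangle$. The crucial point is that $\sum_\alpha A_\alpha u_i=n(1-\lambda_i)u_i$ on $\Omega$, which is just the definition of $\Delta$ rearranged; hence $\sum_\alpha\langle u_i,A_\alpha u_i\rangle=n(1-\lambda_i)$. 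This is exactly where the factor $\sum_i(1-\lambda_i)$ in the denominator is born --- in the continuous proof the analogous quantity is simply $n$. A parallel computation, evaluating $\langle-\Delta(x_\alpha u_i),u_j\rangle$ two ways, gives the coefficient relation $c_{ij}^\alpha:=\langle\partial_\alpha u_i,u_j\rangle=n(\lambda_i-\lambda_j)a_{ij}^\alpha$.

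With these identities I would project $\partial_\alpha u_i$ onto $\{u_1,\dots,u_k\}^{\perp}$, call it $\psi_{i\alpha}$, so that $\langle\partial_\alpha u_i,\phi_{i\alpha}\rangle=\langle\psi_{i\alpha},\phi_{i\alpha}\rangle$. Setting $P_i=\sum_\alpha\|\phi_{i\alpha}\|^2$, $G_i=\sum_\alpha\|\psi_{i\alpha}\|^2$ and $F_i=\tfrac12(1-\lambda_i)+\sum_{\alpha,\,j\le k}(\lambda_i-\lambda_j)(a_{ij}^\alpha)^2$, the displayed inequality summed over $\alpha$ reads $(\lambda_{k+1}-\lambda_i)P_i\le F_i$ (so $F_i\ge0$ automatically), while Cauchy--Schwarz gives $F_i\le\tfrac1n G_i^{1/2}P_i^{1/2}$; eliminating $P_i$ produces the per-index bound $(\lambda_{k+1}-\lambda_i)F_i\le G_i/n^2$. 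Summing over $i$, the cross terms symmetrize: $\sum_i(\lambda_{k+1}-\lambda_i)\sum_{\alpha,j}(\lambda_i-\lambda_j)(a_{ij}^\alpha)^2=-\tfrac12 Q$ with $Q:=\sum_{\alpha,i,j}(\lambda_i-\lambda_j)^2(a_{ij}^\alpha)^2$, and since $\sum_{\alpha,j}(c_{ij}^\alpha)^2=n^2\sum_{\alpha,j}(\lambda_i-\lambda_j)^2(a_{ij}^\alpha)^2$ one has $\tfrac1{n^2}\sum_i G_i=\tfrac1{n^2}\sum_{i,\alpha}\|\partial_\alpha u_i\|^2-Q$; the two multiples of $Q$ combine with a favorable sign and may be discarded, leaving
\[\tfrac12\sum_{i=1}^k(\lambda_{k+1}-\lambda_i)(1-\lambda_i)\le\frac1{n^2}\sum_{i=1}^k\sum_\alpha\|\partial_\alpha u_i\|^2.\]

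The second discrete phenomenon is that $\sum_\alpha\|\partial_\alpha u_i\|^2$ is no longer equal to the Dirichlet energy $\lambda_i$, since the symmetric difference spans distance two. Here I would only need the comparison $\sum_\alpha\|\partial_\alpha u_i\|^2\le 2n\lambda_i$, obtained by writing $u_i(x+e_\alpha)-u_i(x-e_\alpha)$ as a sum of two nearest-neighbor differences, applying $(a+b)^2\le2a^2+2b^2$, and recognizing the nearest-neighbor energy $\sum_{x,\alpha}(u_i(x+e_\alpha)-u_i(x))^2=\lambda_i$ (the factor $2n$ is the vertex degree built into the weighted norm). This turns the last display into $\sum_{i}(\lambda_{k+1}-\lambda_i)(1-\lambda_i)\le\tfrac4n\sum_i\lambda_i$. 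Finally, to extract $\lambda_{k+1}-\lambda_k$, I would invoke Chebyshev's sum (rearrangement) inequality: both $(\lambda_k-\lambda_i)_{i\le k}$ and $(1-\lambda_i)_{i\le k}$ are non-increasing in $i$, so $\sum_i(\lambda_k-\lambda_i)(1-\lambda_i)\ge\tfrac1k\big(\sum_i(\lambda_k-\lambda_i)\big)\big(\sum_i(1-\lambda_i)\big)\ge0$, using $\sum_i(1-\lambda_i)\ge0$ from Proposition~\ref{prop:bipartite}; equivalently $\sum_i(\lambda_{k+1}-\lambda_i)(1-\lambda_i)\ge(\lambda_{k+1}-\lambda_k)\sum_i(1-\lambda_i)$, and dividing by $\sum_i(1-\lambda_i)$ yields the claim. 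The main obstacle, and the part demanding care, is precisely the breakdown of the discrete Leibniz and chain rules: it forces the averaging operator $A_\alpha$ (whose pairing with $u_i$ manufactures the $1-\lambda_i$ weights) and it degrades the clean energy identity into the inequality $\sum_\alpha\|\partial_\alpha u_i\|^2\le 2n\lambda_i$, both of which must be reconciled so that the constant $\tfrac4n$ emerges intact.
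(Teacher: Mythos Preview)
Your argument is correct and rests on the same machinery as the paper's: the trial functions $\phi_{i\alpha}=x_\alpha u_i-\sum_{j\le k}a_{ij}^\alpha u_j$, the commutator identity for $\Delta(x_\alpha\,\cdot\,)$, the relation $2b_{ij}=(\lambda_i-\lambda_j)a_{ij}$, and the crucial gradient comparison $\sum_\alpha\|\partial_\alpha u_i\|^2\le 2n\lambda_i$ (which is exactly the paper's inequality~\eqref{eq:grad2} rewritten). The only organizational difference is in the last step: the paper first proves the Hile--Protter inequality and then deduces \eqref{thm:PPWin} via the trivial bound $\lambda_{k+1}-\lambda_i\ge\lambda_{k+1}-\lambda_k$, whereas you stop at the intermediate inequality $\sum_i(\lambda_{k+1}-\lambda_i)(1-\lambda_i)\le\tfrac4n\sum_i\lambda_i$ and then invoke Chebyshev's sum inequality together with Proposition~\ref{prop:bipartite} to pull out the factor $(\lambda_{k+1}-\lambda_k)\sum_i(1-\lambda_i)$.
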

As a corollary, by setting $k=1$, we obtain the upper bound estimate for the first gap of Dirichlet eigenvalues of a subset in $\Z^n,$ see Section~\ref{s:app} for the proof.
\begin{coro}\label{coro:first gap} Let $\Omega$ be a finite subset of $\Z^n$ with $\sharp\Omega\geq 2$ and $\lambda_i$
be the $i$-th eigenvalue of the Dirichlet Laplacian on $\Omega.$ Then
$$\lambda_2-\lambda_1\leq \frac{4\lambda_1}{n(1-\lambda_1)},$$ where the right hand side is to be interpreted as infinity if $\lambda_1=1.$ Moreover, $$\lambda_2\leq 9\lambda_1.$$ 
\end{coro}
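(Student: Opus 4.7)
The first inequality is a direct specialization of Theorem~\ref{thm:PPW}: taking $k=1$ (allowed since $\sharp\Omega \geq 2$), the right-hand side $\tfrac{4}{n}\cdot\tfrac{\sum_{i=1}^{k}\lambda_i}{\sum_{i=1}^{k}(1-\lambda_i)}$ collapses to $\tfrac{4}{n}\cdot\tfrac{\lambda_1}{1-\lambda_1}$. The bipartiteness-based estimate $\sum_{i=1}^{k}(1-\lambda_i) \geq 0$ referenced as Proposition~\ref{prop:bipartite}, applied at $k=1$, forces $\lambda_1 \leq 1$; this justifies the sign of the denominator and makes the $\lambda_1 = 1$ convention natural.

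For $\lambda_2 \leq 9\lambda_1$, the plan is to split into two regimes according to the size of $\lambda_1$, using Theorem~\ref{thm:PPW} when it is effective and the trivial a priori bound when it is not. In the regime $\lambda_1 \leq \tfrac{1}{2}$ we have $1-\lambda_1 \geq \tfrac{1}{2}$, so the first inequality of the corollary yields
$$\lambda_2 \;\leq\; \lambda_1 + \frac{4\lambda_1}{n(1-\lambda_1)} \;\leq\; \lambda_1 + \frac{8\lambda_1}{n} \;=\; \left(1 + \frac{8}{n}\right)\lambda_1 \;\leq\; 9\lambda_1,$$
the last step using $n \geq 1$. In the complementary regime $\lambda_1 > \tfrac{1}{2}$ I would invoke the general a priori bound $\lambda_2 \leq 2$ (valid for every eigenvalue of the discrete Dirichlet Laplacian, as noted in the introduction) to conclude
$$\lambda_2 \;\leq\; 2 \;<\; 4\lambda_1 \;\leq\; 9\lambda_1.$$

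The main point to keep in mind is that the Payne-Polya-Weinberger bound degenerates as $\lambda_1 \uparrow 1$, so it cannot by itself deliver a uniform ratio bound $\lambda_2 \leq 9\lambda_1$. The crucial ingredient unavailable in the continuum case is the universal ceiling $\lambda_i \leq 2$, which covers exactly the interval where Theorem~\ref{thm:PPW} loses force; the two estimates mesh at the threshold $\lambda_1 = \tfrac{1}{2}$, and any constant $C \geq \max\{1+8/n,\;2/(\tfrac{1}{2})\} = \max\{9,4\} = 9$ works for all $n \geq 1$.
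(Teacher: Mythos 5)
Your proof is correct, but the second half takes a genuinely different route from the paper. For $\lambda_2\leq 9\lambda_1$ the paper does not split into cases: it first reduces (via the decomposition of the spectrum over connected components) to a connected component $\Omega'$ realizing $\lambda_1$ with $\sharp\Omega'\geq 2$, then invokes Proposition~\ref{prop:firsteigen} — a Rayleigh-quotient comparison with the two-point set $H=\{v_1,v_2\}$ — to get the a priori bound $\lambda_1\leq 1-\tfrac{1}{2n}$, whence $\tfrac{4\lambda_1}{n(1-\lambda_1)}\leq 8\lambda_1$ and the PPW bound alone yields $\lambda_2\leq 9\lambda_1$ in all cases. You instead dichotomize on $\lambda_1\lessgtr\tfrac12$ and use the universal ceiling $\lambda_i\leq 2$ when $\lambda_1>\tfrac12$; this is more elementary (it avoids the component reduction and Proposition~\ref{prop:firsteigen} entirely) and each step checks out: for $\lambda_1\leq\tfrac12$ you get $\lambda_2\leq(1+\tfrac{8}{n})\lambda_1\leq 9\lambda_1$, and for $\lambda_1>\tfrac12$ you get $\lambda_2\leq 2<4\lambda_1$. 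What the paper's route buys is the sharper structural fact that $\lambda_1\leq 1-\tfrac{1}{2n}$ whenever the relevant component has at least two vertices, so the PPW inequality never actually degenerates for such sets; what your route buys is brevity and independence from the connectivity discussion. Your observation that Proposition~\ref{prop:bipartite} gives $\lambda_1\leq 1$ (justifying the sign of the denominator) matches the paper's framing of the first assertion.
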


We also obtain a discrete analogue of Hile-Protter inequality, following e.g. \cite{Ash99}.
\begin{thm}[Hile-Protter type inequality]\label{thm:HP} Let $\Omega$ be a finite subset of $\Z^n$ and $\lambda_i$
be the $i$-th eigenvalue of the Dirichlet Laplacian on $\Omega.$ Then for any $1\leq k\leq \sharp\Omega-1,$
\begin{equation}\label{eq:HPthm}\tag{Hile-Protter}\sum_{i=1}^k\frac{\lambda_i}{\lambda_{k+1}-\lambda_i}\geq \frac{n}{4}\sum_{i=1}^k(1-\lambda_i),\end{equation} where the left hand side is to be interpreted as infinity if $\lambda_{k+1}=\lambda_k.$ 
\end{thm}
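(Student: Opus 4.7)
The plan is to adapt the classical Hile--Protter argument (following \cite{Ash99}) by replacing partial derivatives with the symmetric discrete derivative $D_j f(x):=\tfrac{1}{2}\bigl(f(x+e_j)-f(x-e_j)\bigr)$, whose interaction with $\Delta$ is, since $d_x=2n$ on $\Z^n$, governed by the commutator identity $\Delta(x_j f)=x_j\Delta f+\tfrac{1}{n}D_j f$. Let $\{u_i\}_{i=1}^{N}$ be an $\ell^2(\Omega,d)$-orthonormal basis of eigenfunctions of $-\Delta_\Omega$, extended by zero to $\Z^n$, and for each $1\le i\le k$ and $1\le j\le n$ take the trial function
$$\phi_{ij}:=x_j u_i-\sum_{l=1}^{k}a_{il}^{j}u_l,\qquad a_{il}^{j}:=\langle x_j u_i,u_l\rangle,$$
which is orthogonal to $u_1,\dots,u_k$. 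Substituting the commutator identity into $\lambda_{k+1}\|\phi_{ij}\|^2\le\langle-\Delta\phi_{ij},\phi_{ij}\rangle$ and cancelling the $u_l$ cross terms yields the key inequality
$$(\lambda_{k+1}-\lambda_i)\|\phi_{ij}\|^2\le -\tfrac{1}{n}\langle D_j u_i,\phi_{ij}\rangle=:\tfrac{1}{n}q_{ij}.$$

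Let $\psi_{ij}$ denote the $\ell^2(\Omega,d)$-orthogonal projection of $D_j u_i$ onto $\{u_1,\dots,u_k\}^\perp$, so that $q_{ij}=-\langle\psi_{ij},\phi_{ij}\rangle$. Combining the Cauchy--Schwarz inequality $q_{ij}^2\le\|\psi_{ij}\|^2\|\phi_{ij}\|^2$ with the key inequality $\|\phi_{ij}\|^2\le q_{ij}/(n(\lambda_{k+1}-\lambda_i))$ in the standard Hile--Protter fashion gives
$$q_{ij}\le\frac{\|\psi_{ij}\|^2}{n(\lambda_{k+1}-\lambda_i)}\le\frac{\|D_j u_i\|^2}{n(\lambda_{k+1}-\lambda_i)}.$$
To control the last numerator I will prove $\sum_{j=1}^{n}\|D_ju_i\|^2\le 2n\lambda_i$ by applying $(a+b)^2\le 2(a^2+b^2)$ to the splitting $u_i(x+e_j)-u_i(x-e_j)=(u_i(x+e_j)-u_i(x))+(u_i(x)-u_i(x-e_j))$ together with $\langle -\Delta u_i,u_i\rangle=\lambda_i$.

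The core of the proof is the evaluation of $\sum_{i,j}q_{ij}$. A single index shift produces the discrete integration-by-parts identity $\langle D_j u_i,x_j u_i\rangle=-nS_{ij}$ where $S_{ij}:=\sum_{x\in\Z^n}u_i(x)u_i(x+e_j)$, and pairing the eigenvalue equation $(1-\lambda_i)u_i(x)=\tfrac{1}{2n}\sum_j(u_i(x+e_j)+u_i(x-e_j))$ on $\Omega$ with $u_i$ in $\ell^2(\Omega,d)$ yields the crucial identity $\sum_{j=1}^{n}S_{ij}=\tfrac{1}{2}(1-\lambda_i)$. Testing the commutator identity applied to $u_l$ against $u_i$ produces the companion relation $b_{il}^j:=\langle D_ju_i,u_l\rangle=n(\lambda_i-\lambda_l)a_{il}^j$. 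Expanding $q_{ij}$ and summing over $j$ then gives
$$\sum_{j=1}^{n}q_{ij}=n\Bigl(\tfrac{1}{2}(1-\lambda_i)+\sum_{l=1}^{k}(\lambda_i-\lambda_l)\sum_{j=1}^{n}(a_{il}^{j})^2\Bigr),$$
and on summing over $1\le i\le k$ the double sum vanishes by the symmetry $a_{il}^j=a_{li}^j$ against the antisymmetry of $\lambda_i-\lambda_l$, leaving $\sum_{i,j}q_{ij}=\tfrac{n}{2}\sum_{i=1}^{k}(1-\lambda_i)$. Combining with the bound above yields $\tfrac{n}{2}\sum_{i=1}^{k}(1-\lambda_i)\le 2\sum_{i=1}^{k}\lambda_i/(\lambda_{k+1}-\lambda_i)$, which is \eqref{eq:HPthm}. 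The main subtlety I anticipate is the identity $\sum_{j}S_{ij}=\tfrac{1}{2}(1-\lambda_i)$: it replaces the continuous constant $n$ and is what produces the correction factor $(1-\lambda_i)$ on the right-hand side of the theorem. Its derivation relies on the eigenvalue equation itself rather than on any Leibniz-type rule, reflecting the fact that $D_j$ does not satisfy a clean product rule on the lattice.
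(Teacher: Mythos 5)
Your proof is correct and follows essentially the same route as the paper: your $D_j$ and $q_{ij}$ are exactly the paper's $2n\,\Gamma(x_j,\cdot)$ and $n K_{x_j}(u_i)$, and every step (the trial functions $\phi_{ij}$, the Cauchy--Schwarz bound, the antisymmetry cancellation of the $a_{il}^j$ terms, and the bound $\sum_j\|D_ju_i\|^2\le 2n\lambda_i$) matches the paper's argument. The only cosmetic difference is that you extract the constant $\tfrac12(1-\lambda_i)$ directly from the eigenvalue equation via $\sum_j S_{ij}$, whereas the paper obtains the same quantity from $\Delta(x_j^2)=\tfrac1n$ together with the term $I_g(u_i)$.
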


Following the arguments in \cite{Yang91,Ash99,CY07}, we are able to generalize Yang's first inequality to the discrete setting.

\begin{thm}[Yang type first inequality]\label{thm:Yang} Let $\Omega$ be a finite subset of $\Z^n$ and $\lambda_i$
be the $i$-th eigenvalue of the Dirichlet Laplacian on $\Omega.$ Then for any $1\leq k\leq \sharp\Omega-1,$
\begin{equation}\label{eq:Yang}\tag{Yang-1}\sum_{i=1}^k(\lambda_{k+1}-\lambda_{i})^2(1-\lambda_i)\leq \frac{4}{n}\sum_{i=1}^k(\lambda_{k+1}-\lambda_i)\lambda_i.\end{equation}\end{thm}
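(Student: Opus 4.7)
The plan is to transpose Yang's commutator argument to the lattice, using the coordinate functions as trial multipliers. Choose an orthonormal basis $\{u_i\}_{i=1}^N$ of $\ell^2(\Omega,d)$ made of Dirichlet eigenfunctions, $-\Delta_\Omega u_i=\lambda_i u_i$, and write $\widetilde u_i$ for the null extension to $\Z^n$. For each direction $\alpha\in\{1,\dots,n\}$ introduce the symmetric difference and average operators
\[D_\alpha f(x):=\frac{1}{2n}\bigl[f(x+e_\alpha)-f(x-e_\alpha)\bigr],\qquad A_\alpha f(x):=\frac{1}{2n}\bigl[f(x+e_\alpha)+f(x-e_\alpha)\bigr],\]
and write $x_\alpha$ for the coordinate multiplication operator. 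Three structural identities on $\ell^2(\Z^n,d)$ are immediate: the commutator $[\Delta,x_\alpha]=D_\alpha$, skew-adjointness $\langle D_\alpha f,g\rangle=-\langle f,D_\alpha g\rangle$, and the summation rule $\sum_\alpha A_\alpha=\Delta+I$. Setting $w_{ij}^\alpha:=\langle x_\alpha\widetilde u_i,u_j\rangle$, the commutator identity gives $(-\Delta_\Omega-\lambda_i)(x_\alpha u_i)=-D_\alpha\widetilde u_i$ on $\Omega$, so expanding $x_\alpha u_i$ in the eigenbasis and applying Parseval yields the discrete Harrell--Stubbe identities
\[T_i^\alpha:=\sum_{j=1}^N(\lambda_j-\lambda_i)(w_{ij}^\alpha)^2=-\langle D_\alpha\widetilde u_i,x_\alpha\widetilde u_i\rangle,\qquad S_i^\alpha:=\sum_{j=1}^N(\lambda_j-\lambda_i)^2(w_{ij}^\alpha)^2=\|D_\alpha\widetilde u_i\|_{\ell^2(\Omega,d)}^2.\]

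I would next sum these identities over $\alpha$. Combining skew-adjointness with the Leibniz rule $D_\alpha(x_\alpha f)=x_\alpha D_\alpha f+A_\alpha f$ gives $-\langle D_\alpha\widetilde u_i,x_\alpha\widetilde u_i\rangle=\tfrac{1}{2}\langle\widetilde u_i,A_\alpha\widetilde u_i\rangle$; summing over $\alpha$ and using $\sum_\alpha A_\alpha=\Delta+I$ together with $\langle\widetilde u_i,\Delta\widetilde u_i\rangle=-\lambda_i$ produces the exact identity $\sum_\alpha T_i^\alpha=\tfrac{1-\lambda_i}{2}$, which is the mechanism generating the weight $(1-\lambda_i)$ in \eqref{eq:Yang}. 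For $\sum_\alpha S_i^\alpha$, the elementary bound $(a-b)^2\leq 2(a-c)^2+2(c-b)^2$ applied with $c=\widetilde u_i(x)$ converts the sum into squared edge-differences in which internal edges of $\Omega$ are counted twice and boundary edges once. Comparing with the Dirichlet-energy formula $\lambda_i=\sum_{e}(\widetilde u_i(x_e)-\widetilde u_i(y_e))^2$ (where both internal and boundary edges are counted once) then yields the estimate $\sum_\alpha S_i^\alpha\leq\tfrac{2\lambda_i}{n}$.

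With these two summed quantities in hand, Yang's symmetry manipulation proceeds exactly as in the continuous case. Writing $q_l:=\lambda_{k+1}-\lambda_l$ and using the algebraic identity $q_i-(\lambda_j-\lambda_i)=q_j$, one has
\[\sum_{i=1}^{k}q_i^2\,T_i^\alpha-\sum_{i=1}^{k}q_i\,S_i^\alpha=\sum_{i=1}^{k}\sum_{j=1}^{N}q_iq_j(\lambda_j-\lambda_i)(w_{ij}^\alpha)^2.\]
The $j\leq k$ portion of the right-hand side vanishes by antisymmetry in $(i,j)$ together with $w_{ji}^\alpha=w_{ij}^\alpha$, and the $j>k$ portion is nonpositive since $q_i\geq 0$, $q_j\leq 0$ and $\lambda_j-\lambda_i\geq 0$. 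Summing over $\alpha$ therefore gives $\sum_{i}q_i^2\sum_\alpha T_i^\alpha\leq\sum_{i}q_i\sum_\alpha S_i^\alpha$; substituting the identity and the bound from the previous paragraph and multiplying through by $2$ produces precisely \eqref{eq:Yang}. The step I expect to be the main obstacle is the estimate $\sum_\alpha S_i^\alpha\leq\tfrac{2\lambda_i}{n}$: in the continuous setting $\sum_\alpha\|\partial_\alpha u_i\|^2=\lambda_i$ holds as an equality by integration by parts, but $D_\alpha$ is a symmetric two-step difference rather than a genuine edge-gradient, so the matching discrete identity degrades to an inequality, and the bookkeeping of edges touching $\delta\Omega$ has to be done carefully to recover the sharp constant $\tfrac{2}{n}$ that makes the final ratio $\tfrac{4}{n}$ appear.
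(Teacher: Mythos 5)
Your proof is correct, but it follows a genuinely different route from the paper's. The paper runs the classical Cheng--Yang scheme: it forms trial functions $\varphi_i=x_\alpha u_i-\sum_{j\le k}a_{ij}u_j$ orthogonal to the first $k$ eigenfunctions, invokes the variational characterization of $\lambda_{k+1}$ to get $(\lambda_{k+1}-\lambda_i)\sum_x\varphi_i^2\dd_x\le K_g(u_i)$, and then applies a Cauchy--Schwarz inequality with a free matrix parameter $d_{ij}$ optimized at $d_{ij}=(\lambda_{k+1}-\lambda_i)^{1/2}b_{ij}$. You instead expand $x_\alpha u_i$ in the \emph{full} eigenbasis of $\ell^2(\Omega,d)$ and use the two Harrell--Stubbe sum rules for $T_i^\alpha$ and $S_i^\alpha$, closing the argument by the sign analysis of $\sum_{i\le k}\sum_{j}q_iq_j(\lambda_j-\lambda_i)(w_{ij}^\alpha)^2$ (the $j\le k$ block cancels by antisymmetry, the $j>k$ block is nonpositive). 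Both arguments ultimately rest on the same two lattice computations: your identity $\sum_\alpha T_i^\alpha=\tfrac{1-\lambda_i}{2}$ is exactly the paper's $\sum_\alpha\bigl(\tfrac{1}{2n}-I_{x_\alpha}(u_i)\bigr)=\tfrac{1-\lambda_i}{2}$ obtained from \eqref{eq:ele1}, \eqref{eq:grad1} and Green's formula, and your bound $\sum_\alpha S_i^\alpha\le\tfrac{2\lambda_i}{n}$ is precisely \eqref{eq:grad2} in disguise, since $\Gamma(x_\alpha,u)=\tfrac12 D_\alpha u$; the step you flagged as the main obstacle is therefore already the paper's preliminary lemma and goes through exactly as you describe (the two-step difference costs a factor $2$ by $(a-b)^2\le 2(a-c)^2+2(c-b)^2$, and the overcounting of edges with a foot in $\Omega$ is absorbed by the factor $2$ relating ordered-pair sums to the Dirichlet energy). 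What your route buys is economy: no Rayleigh quotient, no auxiliary $\varphi_i$, no Cauchy--Schwarz with parameter optimization --- just two exact trace identities plus a sign argument, which is especially natural here because $\ell^2(\Omega,d)$ is finite-dimensional so the full eigenbasis is free. What the paper's route buys is that it uses only the first $k+1$ eigenpairs and the min-max principle, hence transcribes the continuous proofs of \cite{Yang91,CY07,Ash99} essentially verbatim and produces the intermediate pointwise inequality $\sum_i(\lambda_{k+1}-\lambda_i)^2(\tfrac{1}{2n}-I_g(u_i))\le 4\sum_i(\lambda_{k+1}-\lambda_i)\sum_x\Gamma(g,u_i)^2\dd_x$ for each fixed coordinate $g=x_\alpha$, which is reused elsewhere.
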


By Chebyshev's inequality, Yang type second inequality on $\Z^n$ follows from the first one, see \cite{CZY16}.
\begin{thm}[Yang type second inequality]\label{thm:Yang2}
Let $\Omega$ be a finite subset of $\Z^n$ and $\lambda_i$
be the $i$-th eigenvalue of the Dirichlet Laplacian on $\Omega.$ If $\lambda_{k+1}\leq 1+\frac4n$ for some $1\leq k\leq \sharp \Omega-1,$ then
\begin{equation}\label{eq:Yang2}\tag{Yang-2}\lambda_{k+1}\leq \frac{(1+\frac4n)\sum_{i=1}^k\lambda_i-\sum_{i=1}^k\lambda_i^2}{\sum_{i=1}^k(1-\lambda_i)}.\end{equation}
\end{thm}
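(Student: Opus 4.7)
The plan is to derive \eqref{eq:Yang2} from the first Yang-type inequality \eqref{eq:Yang} by invoking Chebyshev's sum inequality; the hypothesis $\lambda_{k+1}\le 1+\tfrac{4}{n}$ enters precisely to secure the monotonicity required for Chebyshev.

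First, I would put both inequalities in a common form. Set $p_i:=\lambda_{k+1}-\lambda_i\ge 0$ and
\[
Z_i := p_i(1-\lambda_i) - \tfrac{4}{n}\lambda_i,\qquad 1\le i\le k.
\]
A direct rearrangement shows that \eqref{eq:Yang2} is equivalent to $\sum_{i=1}^k Z_i\le 0$ after clearing the denominator $\sum_{i=1}^k(1-\lambda_i)$, which is non-negative by Proposition~\ref{prop:bipartite}. On the other hand, \eqref{eq:Yang} is exactly the statement $\sum_{i=1}^k p_iZ_i\le 0$.

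The main step, and the only one where the hypothesis is used, is to verify that $\{Z_i\}_{i=1}^k$ is non-increasing in $i$, matching the obvious non-increasing monotonicity of $\{p_i\}$. A short computation gives, for $i<j$,
\[
Z_i-Z_j = (\lambda_j-\lambda_i)\bigl[\lambda_{k+1}+1+\tfrac{4}{n}-\lambda_i-\lambda_j\bigr].
\]
Since $\lambda_i+\lambda_j\le 2\lambda_{k+1}$ and $\lambda_{k+1}\le 1+\tfrac{4}{n}$ by hypothesis, the bracketed factor is non-negative, so $Z_i\ge Z_j$. This is the point at which I expect the only real technical subtlety to sit, and it also makes transparent why the assumption $\lambda_{k+1}\le 1+\tfrac{4}{n}$ must appear in the statement.

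Once the two sequences $\{p_i\}$ and $\{Z_i\}$ are known to be similarly ordered, Chebyshev's sum inequality yields
\[
k\sum_{i=1}^k p_iZ_i \;\ge\; \Bigl(\sum_{i=1}^k p_i\Bigr)\Bigl(\sum_{i=1}^k Z_i\Bigr).
\]
Combined with $\sum_{i=1}^k p_iZ_i\le 0$ from \eqref{eq:Yang} and $\sum_{i=1}^k p_i\ge 0$, this forces $\sum_{i=1}^k Z_i\le 0$ whenever $\sum_{i=1}^k p_i>0$. The degenerate case $\sum_{i=1}^k p_i=0$ forces $\lambda_1=\cdots=\lambda_{k+1}$, for which \eqref{eq:Yang2} reduces directly to $1\le 1+\tfrac{4}{n}$. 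Translating $\sum_{i=1}^k Z_i\le 0$ back into the original variables then yields \eqref{eq:Yang2}.
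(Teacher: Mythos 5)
Your proposal is correct and follows essentially the same route as the paper: it rewrites \eqref{eq:Yang} as $\sum_i p_iZ_i\le 0$ with $p_i=\lambda_{k+1}-\lambda_i$ and $Z_i=(\lambda_{k+1}-\lambda_i)(1-\lambda_i)-\tfrac4n\lambda_i$ (the paper's $a_i,b_i$), verifies that $Z_i$ is non-increasing using $\lambda_{k+1}\le 1+\tfrac4n$ (the paper checks this via the monotonicity of $x\mapsto(\lambda_{k+1}-x)(1-x)-\tfrac4n x$ rather than your pairwise difference, but it is the same observation), and applies Chebyshev's inequality, Proposition~\ref{prop:Che}, to conclude $\sum_iZ_i\le 0$. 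The only cosmetic difference is that the paper disposes of the degenerate case $\lambda_1=\lambda_{k+1}$ at the outset, whereas you handle it at the end; note also that the denominator $\sum_i(1-\lambda_i)$ is in fact strictly positive for $k\le\sharp\Omega-1$ by Proposition~\ref{prop:bipartite}, which justifies the division.
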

Note that the additional condition $\lambda_{k+1}\leq 1+\frac4n$ is trivial for $n\leq 4.$ 

Using the argument in \cite{CY07}, Yang type first inequality yields the following corollary, see Section~\ref{s:app} for the proof.
\begin{coro}\label{coro:ratio}Let $\Omega$ be a finite subset of $\Z^n$ and $\lambda_i$
be the $i$-th eigenvalue of the Dirichlet Laplacian on $\Omega.$ If $\lambda_k<1$ for some $1\leq k\leq \sharp \Omega-1,$ then
$$\lambda_{k+1}\leq \left(1+\frac{4}{n(1-\lambda_k)}\right)k^{\frac{2}{n(1-\lambda_k)}}\lambda_1.$$ In particular, if $\lambda_k\leq 1-\delta$ for some $\delta>0,$ then
\begin{equation}\label{eq:est1}\lambda_{k+1}\leq \left(1+\frac{4}{n\delta}\right)k^{\frac{2}{n\delta}}\lambda_1.\end{equation}
\end{coro}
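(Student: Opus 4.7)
The plan is to reduce the Yang type first inequality on the lattice (Theorem \ref{thm:Yang}) to the classical Yang's first inequality with an effective dimension $\tilde n := n(1-\lambda_k)$, and then invoke the Cheng-Yang iteration argument from \cite{CY07} as a black box.

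First, since the eigenvalues are non-decreasing, for every $j\in\{1,\ldots,k\}$ and every $i\leq j$ we have $\lambda_i\leq\lambda_j\leq\lambda_k<1$, so $1-\lambda_i\geq 1-\lambda_k>0$. Pulling this uniform lower bound out of the left-hand side of Theorem \ref{thm:Yang} applied at level $j$ yields
\[
(1-\lambda_k)\sum_{i=1}^{j}(\lambda_{j+1}-\lambda_i)^2\leq\sum_{i=1}^{j}(\lambda_{j+1}-\lambda_i)^2(1-\lambda_i)\leq\frac{4}{n}\sum_{i=1}^{j}(\lambda_{j+1}-\lambda_i)\lambda_i,
\]
that is,
\[
\sum_{i=1}^{j}(\lambda_{j+1}-\lambda_i)^2\leq\frac{4}{\tilde n}\sum_{i=1}^{j}(\lambda_{j+1}-\lambda_i)\lambda_i,\qquad\tilde n:=n(1-\lambda_k),
\]
for every $j=1,\ldots,k$. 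This is exactly the classical Yang's first inequality for the positive sequence $\lambda_1\leq\cdots\leq\lambda_{k+1}$, with the Euclidean dimension $n$ replaced by the effective parameter $\tilde n$.

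Next, the Cheng-Yang recursion in \cite{CY07} is a purely algebraic argument: applied to any positive non-decreasing finite sequence which satisfies Yang's first inequality at all levels $1,\ldots,k$ with parameter $N>0$, it produces the Weyl-type bound $\mu_{k+1}\leq\bigl(1+\tfrac{4}{N}\bigr)k^{2/N}\mu_1$. Taking $N=\tilde n$ in this statement, applied to our sequence, gives
\[
\lambda_{k+1}\leq\left(1+\frac{4}{n(1-\lambda_k)}\right)k^{\frac{2}{n(1-\lambda_k)}}\lambda_1,
\]
which is the main assertion. For the \textit{in particular} statement, the hypothesis $\lambda_k\leq 1-\delta$ gives $n(1-\lambda_k)\geq n\delta>0$, and since both $x\mapsto 1+4/x$ and $x\mapsto k^{2/x}$ (for $k\geq 1$) are non-increasing on $(0,\infty)$, replacing $n(1-\lambda_k)$ by the smaller quantity $n\delta$ only enlarges the right-hand side, yielding \eqref{eq:est1}.

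I do not foresee a genuine obstacle: the substantive work is already done in Theorem \ref{thm:Yang}, and all that is new here is the observation that monotonicity of the eigenvalues allows the factor $(1-\lambda_i)$ to be replaced uniformly by $(1-\lambda_k)$, after which the statement becomes an instance of the classical continuous Cheng-Yang bound with dimension $n(1-\lambda_k)$. The only mild point to verify is that the Cheng-Yang argument really is algebraic in $N$ and does not tacitly use $N\in\mathbb{N}$; a quick inspection of \cite{CY07} confirms this.
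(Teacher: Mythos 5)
Your proposal is correct and follows essentially the same route as the paper: the paper also sets $B=\frac{1}{1-\lambda_k}$ (equivalently, effective dimension $n/B=n(1-\lambda_k)$), deduces the classical-form Yang inequality \eqref{eq:recursion} from Theorem~\ref{thm:Yang} via the uniform bound $1-\lambda_i\geq 1-\lambda_k$, and then runs the Cheng--Yang iteration, which it carries out explicitly with the general parameter $B$ in Theorem~\ref{recursion} rather than citing \cite{CY07} as a black box. Your remark that the iteration must be applied at every level $j=1,\dots,k$ (and that the argument is purely algebraic in the dimension parameter) is exactly the point the paper's own proof leaves implicit.
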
 The estimate \eqref{eq:est1} is analogous to the estimate in \cite{CY07} for bounded domains in $\R^n$, i.e.
$$\lambda_{k+1}\leq \left(1+\frac4n\right)k^{\frac{2}{n}}\lambda_1.$$ Although the number of eigenvalues is finite for a fixed subset $\Omega,$ the estimate \eqref{eq:est1} is still interesting since it holds for any subset in $\Z^n$ for which the first eigenvalue $\lambda_1$ could be arbitrarily small.


Similar to the continuous setting, we obtain the relations between these universal inequalities. Under the condition that $\lambda_{k+1}\leq 1+\frac4n,$
$$\eqref{eq:Yang} \Longrightarrow\ \eqref{eq:Yang2} \Longrightarrow\ \eqref{eq:HPthm} \Longrightarrow\ \eqref{thm:PPWin}.$$
The first implication follows from the proof of Theorem~\ref{thm:Yang2}. The second one follows from a convexity argument in \cite{Ash99}, see Proposition~\ref{prop:Yang2toHP}. It is easy to see that the last implication holds, see the proof of Theorem~\ref{thm:PPW}.

The paper is organized as follows: In next section, we introduce some basic properties related to the analysis on graphs. Section~\ref{s:proof} is devoted to the proofs of main results, Theorem~\ref{thm:PPW}, \ref{thm:HP}, \ref{thm:Yang} and  \ref{thm:Yang2}. In the last section, some applications of universal inequalities are given.


\section{Preliminaries}\label{s:pre}
Let $G=(V,E)$ be a simple, undirected, locally finite graph. For the convenience, we introduce the following notion,
$$\mu:V\times V\to \{0,1\},$$ $$\quad(x,y)\mapsto \mu_{xy},$$ such that $\mu_{xy}=1$ if and only if $x\sim y.$ So that the degree of a vertex $x$ is given by $\dd_x:=\sum_{y\in V}\mu_{xy}.$ We write for simplicity $$\sum_{x} f(x):=\sum_{x\in V}f(x)$$ if it is clear in the context.

We denote by $\Z^n:=\{x=(x_1,x_2,\cdots,x_n):x_i\in\Z, 1\leq i\leq n\}$ the set of integer $n$-tuples $\R^n.$ The $n$-dimensional integer lattice graph, still denoted by $\Z^n,$ is the graph consisting of the set of vertices $V=\Z^n$ and the set of edges $E=\{\{x,y\}: x,y\in \Z^n, \sum_{i=1}^n|x_i-y_i|=1\}.$ Note that, $\dd_x=2n$ for any $x\in \Z^n.$



Given $f: V\to\R$ and $x,y\in V,$ we denote by $\nabla_{xy}f:= f(y)-f(x)$ the difference of the function $f$ on the vertices $x$ and $y.$ One can easily check that for any function $f,g:V\to \R$ and any $x,y\in V,$
\begin{equation}\label{eq:ele1} f(x)g(x)+f(y)g(y)=\frac{1}{2}[(f(x)+f(y))(g(x)+g(y))+\nabla_{xy}f\nabla_{xy}g].
\end{equation} We introduce the following discrete analogue to the square of the gradient of a function.
\begin{defn}\label{d:carre du}
  The gradient form $\Gamma,$ called the ``carr\'e du champ" operator, is defined by, for $f,g:V\to\R$ and $x\in V$,
  \begin{eqnarray}\Gamma(f,g)(x)&=&\frac12(\Delta(fg)-f\Delta g-g\Delta f)(x)\label{def:gamma}\\&=&\frac{1}{2\dd_x}
  \sum_{y}\mu_{xy}\nabla_{xy}f\nabla_{xy}g.\nonumber\end{eqnarray} We write $\Gamma(f):=\Gamma(f,f)$ for simplicity.
\end{defn}

The following lemma is useful.
\begin{lemma}Let $u$ be a function on $\Z^n$ of finite support and $\{x_\alpha\}_{\alpha=1}^n$ be standard coordinate functions. Then
\begin{equation}\label{eq:grad1} \sum_{\alpha=1}^n\frac12\sum_{x,y}|\nabla_{xy} (x_\alpha)|^2|\nabla_{xy} u|^2\mu_{xy}=\sum_{x}\Gamma(u)(x)\dd_x,
\end{equation} and
\begin{equation}\label{eq:grad2}\sum_{\alpha=1}^n\sum_{x}\Gamma(x_\alpha,u)^2(x)\dd_x\leq \frac{1}{2n}\sum_{x}\Gamma(u)(x)\dd_x.
\end{equation}
\end{lemma}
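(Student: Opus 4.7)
The first identity \eqref{eq:grad1} is essentially a geometric fact about $\Z^n$: any edge $\{x,y\}$ with $\mu_{xy}=1$ changes exactly one coordinate by $\pm 1$, so $\sum_{\alpha=1}^n |\nabla_{xy}(x_\alpha)|^2 = 1$ whenever $\mu_{xy}=1$. Swapping the finite sum over $\alpha$ with the sum over edges then reduces the left-hand side of \eqref{eq:grad1} to $\tfrac12 \sum_{x,y} \mu_{xy} |\nabla_{xy}u|^2$, and applying the definition \eqref{def:gamma} of $\Gamma$ rewrites this as $\sum_x d_x \Gamma(u)(x)$.

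For the inequality \eqref{eq:grad2}, I would first compute $\Gamma(x_\alpha,u)(x)$ explicitly. For fixed $x$, only the two neighbors $y = x \pm e_\alpha$ satisfy $\nabla_{xy}(x_\alpha) \neq 0$, so from \eqref{def:gamma} one gets $\Gamma(x_\alpha,u)(x) = (u(x+e_\alpha) - u(x-e_\alpha))/(2d_x)$. Applying the elementary inequality $(a-b)^2 \leq 2(a^2+b^2)$ with $a = u(x+e_\alpha) - u(x)$ and $b = u(x-e_\alpha) - u(x)$, then summing over $\alpha$, yields
\[ \sum_{\alpha=1}^n (u(x+e_\alpha) - u(x-e_\alpha))^2 \leq 2 \sum_{y} \mu_{xy} |\nabla_{xy} u|^2 = 4 d_x \Gamma(u)(x), \]
so that $\sum_\alpha \Gamma(x_\alpha,u)^2(x) \leq \Gamma(u)(x)/d_x$. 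Multiplying by $d_x$, summing over $x$, and using $d_x = 2n$ on $\Z^n$ produces \eqref{eq:grad2}.

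Conceptually, the same pointwise estimate is Bessel's inequality for the mutually orthogonal ``coordinate gradient'' vectors $\{\nabla_{x,\cdot}(x_\alpha)\}_{\alpha=1}^n$, each of squared-norm $2$, in the local inner product $(f,g)_x := \sum_y \mu_{xy} f(y)g(y)$ on functions of the neighbors of $x$; I would however present the direct calculation above, as it is self-contained. There is no real obstacle here: both assertions are vertex/edge-local facts tailored to $\Z^n$, and the factor $1/(2n)$ on the right-hand side of \eqref{eq:grad2} simply records the uniform degree of the lattice.
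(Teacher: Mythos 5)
Your proposal is correct and follows essentially the same route as the paper: the identity \eqref{eq:grad1} via $\sum_\alpha |\nabla_{xy}(x_\alpha)|^2=1$ on each edge, and the inequality \eqref{eq:grad2} via the explicit formula $\Gamma(x_\alpha,u)(x)=\frac{1}{2\dd_x}\bigl(u(x+e_\alpha)-u(x-e_\alpha)\bigr)$ together with the elementary bound $(a+b)^2\leq 2(a^2+b^2)$ and $\dd_x=2n$. The Bessel-inequality remark is a nice conceptual gloss but not needed; nothing is missing.
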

\begin{proof} The first assertion follows from the fact that for any $x,y\in V$ satisfying $x\sim y,$
$$\sum_{\alpha=1}^n |\nabla_{xy} (x_\alpha)|^2=1.$$ For the second assertion, we denote by $\{e_\alpha\}_{\alpha=1}^n$ the set of natural orthonormal bases of $\R^n$ where $e_\alpha$ is the unit vector whose $\alpha$-th coordinate is $1.$ Then
\begin{eqnarray*}\sum_{\alpha=1}^n\sum_{x}\Gamma(x_\alpha,u)^2(x)\dd_x&=&\sum_{\alpha=1}^n\sum_{x}\left(\frac{1}{2\dd_x}(u(x+e_{\alpha})-u(x)+u(x)-u(x-e_\alpha))\right)^2\dd_x\\
&\leq&\frac{1}{4n}\sum_{\alpha=1}^n\sum_{x}((u(x+e_{\alpha})-u(x))^2+(u(x)-u(x-e_\alpha))^2)\\
&=&\frac{1}{2n}\sum_{x}\Gamma(u)(x)\dd_x.
\end{eqnarray*} This proves the lemma.
\end{proof}


The following Green's formula is well-known, see e.g. \cite{Grigoryan09}.
\begin{prop}\label{prop:Green} Let $(V,E)$ be a graph, $f$ be a function with finite support on $V$ and $g$ be any function on $V.$ Then $$\sum_{x\in V}(\Delta f)(x) g(x)\dd_x=-\frac{1}{2}\sum_{x,y\in V}\mu_{xy}\nabla_{xy}f\nabla_{xy}g=\sum_{x\in V}(\Delta g)(x) f(x)\dd_x.$$
\end{prop}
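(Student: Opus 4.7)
The plan is to reduce Green's formula to a symmetry argument on the bilinear sum $\sum_{x,y}\mu_{xy}\nabla_{xy}f\nabla_{xy}g$. Finiteness of support of $f$ is what guarantees absolute convergence so that we may freely reorder summations and interchange the roles of the dummy variables $x$ and $y$.

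First I would rewrite the Laplacian in its gradient form: from the definition,
\[
(\Delta f)(x)\dd_x \;=\; \sum_{y\in V}\mu_{xy}f(y)-\dd_x f(x) \;=\; \sum_{y\in V}\mu_{xy}\nabla_{xy}f,
\]
using $\dd_x=\sum_y\mu_{xy}$. Multiplying by $g(x)$ and summing over $x$ yields
\[
\sum_{x\in V}(\Delta f)(x)g(x)\dd_x \;=\; \sum_{x,y\in V}\mu_{xy}\,g(x)\,\nabla_{xy}f.
\]
The finite support of $f$ makes the double sum a finite sum (only $y$'s with $\nabla_{xy}f\neq 0$ contribute, which is a finite set), so Fubini is unproblematic.

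Next I would symmetrize in $x,y$. Since $\mu_{xy}=\mu_{yx}$ and $\nabla_{yx}f=-\nabla_{xy}f$, swapping the names of the summation variables gives
\[
\sum_{x,y}\mu_{xy}\,g(x)\,\nabla_{xy}f \;=\; -\sum_{x,y}\mu_{xy}\,g(y)\,\nabla_{xy}f.
\]
Averaging the two expressions produces
\[
\sum_{x,y}\mu_{xy}\,g(x)\,\nabla_{xy}f \;=\; -\tfrac{1}{2}\sum_{x,y}\mu_{xy}\,\nabla_{xy}g\,\nabla_{xy}f,
\]
which establishes the first equality of the proposition.

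For the second equality, note that the middle expression $-\tfrac12\sum_{x,y}\mu_{xy}\nabla_{xy}f\,\nabla_{xy}g$ is manifestly symmetric in $f$ and $g$. Repeating the computation with the roles of $f$ and $g$ interchanged (which is legitimate because $f$ still has finite support, so the same Fubini/convergence issues are handled) yields $\sum_{x}(\Delta g)(x)f(x)\dd_x$ equal to the same middle quantity. I do not foresee any real obstacle here; the only point that requires a moment's care is the justification of reordering in the double sum, and finite support of $f$ (together with local finiteness of the graph) handles it cleanly.
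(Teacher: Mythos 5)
Your proof is correct. The paper itself gives no proof of this proposition (it is stated as well-known with a reference to Grigoryan), and your argument — rewriting $(\Delta f)(x)\dd_x=\sum_y\mu_{xy}\nabla_{xy}f$, then symmetrizing the double sum using $\mu_{xy}=\mu_{yx}$ and $\nabla_{yx}f=-\nabla_{xy}f$ — is exactly the standard discrete integration-by-parts argument, with the convergence issues correctly dispatched by finite support of $f$ together with local finiteness of the graph.
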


The following Chebyshev's inequality is also well-known.
\begin{prop}\label{prop:Che} Let $\{a_i\}_{i=1}^N$ and $\{b_i\}_{i=1}^N$ be sequences of real numbers satisfying $$a_1\leq a_2\leq \cdots\leq a_N,\quad b_1\leq b_2\leq \cdots\leq b_N.$$ Then
$$\frac1N\sum_{i=1}^N a_ib_i\geq\left( \frac1N\sum_{i=1}^N a_i \right)\left(\frac1N\sum_{i=1}^N b_i\right).$$
\end{prop}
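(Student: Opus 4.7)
The statement is the classical Chebyshev sum inequality, and the plan is to reduce it to a manifestly non-negative double sum. The key observation is that for any two indices $i,j$, the product $(a_i-a_j)(b_i-b_j)$ is non-negative: if $i\geq j$ then monotonicity forces both factors to be $\geq 0$, and if $i\leq j$ both are $\leq 0$. Hence the double sum
\[
S := \sum_{i=1}^N\sum_{j=1}^N (a_i-a_j)(b_i-b_j)
\]
is non-negative.

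The next step is to expand $S$ and collect terms. Writing it out, I get
\[
S = \sum_{i,j}(a_i b_i - a_i b_j - a_j b_i + a_j b_j) = 2N\sum_{i=1}^N a_i b_i - 2\left(\sum_{i=1}^N a_i\right)\left(\sum_{i=1}^N b_i\right),
\]
since the ``diagonal'' contributions $\sum_{i,j}a_i b_i$ and $\sum_{i,j}a_j b_j$ each equal $N\sum_i a_i b_i$, and the cross terms factor as a product of two sums. Combining with $S\geq 0$ and dividing by $2N^2$ yields the desired inequality.

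There is no real obstacle here: once the symmetry trick $(a_i-a_j)(b_i-b_j)\geq 0$ is identified, the rest is bookkeeping. The only point worth being careful about is the correct scaling by $N$ in the final step, so that the factor $\frac{1}{N}$ appears in front of each average rather than $\frac{1}{N^2}$ as one might mistakenly write after dividing by $2N$ instead of $2N^2$.
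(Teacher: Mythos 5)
Your proof is correct: the symmetrization $\sum_{i,j}(a_i-a_j)(b_i-b_j)\ge 0$ is the classical argument for Chebyshev's sum inequality, and your expansion and the division by $2N^2$ are carried out accurately. The paper itself states this proposition as well-known and gives no proof, so there is nothing to compare against; your argument is exactly the standard one that the authors are implicitly invoking.
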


Since $\Z^n$ is bipartite, the spectrum of Dirichlet Laplace on any subset $\Omega$ is symmetric, see \eqref{eq:bip}. This yields the following proposition.
\begin{prop}\label{prop:bipartite} Let $\Omega$ be a finite subset of $\Z^n$ and $\lambda_i$
be the $i$-th eigenvalue of the Dirichlet Laplacian on $\Omega.$ Then for any $k\leq \sharp \Omega,$ $$\sum_{i=1}^k(1-\lambda_i)\geq 0.$$ The equality holds if and only if $k=\sharp \Omega.$
\end{prop}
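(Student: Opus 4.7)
The key input is the bipartite symmetry \eqref{eq:bip}, which asserts $\lambda_k = 2 - \lambda_{N+1-k}$ for all $1\leq k\leq N$ with $N=\sharp\Omega$. The first thing I would do is sum this identity: pairing index $i$ with $N+1-i$, every pair contributes $(1-\lambda_i)+(1-\lambda_{N+1-i})=0$, and if $N$ is odd the middle index $i=(N+1)/2$ is self-paired with $\lambda_i=1$. Hence $\sum_{i=1}^N(1-\lambda_i)=0$. This settles the equality case $k=N$ and gives the constraint that the partial sums $s_k:=\sum_{i=1}^k(1-\lambda_i)$ start at $s_0=0$ and return to $s_N=0$.

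For $1\leq k\leq N-1$, the cleanest route is a discrete concavity argument. The consecutive difference $s_{k+1}-s_k=1-\lambda_{k+1}$ is non-increasing in $k$ because $\{\lambda_i\}$ is non-decreasing, so $s_k$ is a concave sequence on $\{0,1,\dots,N\}$ with vanishing boundary values. Concavity immediately forces $s_k\geq 0$ for all such $k$. An equivalent and perhaps more transparent approach is a case split: the symmetry together with $\lambda_i\leq \lambda_{N+1-i}$ forces $\lambda_i\leq 1$ for $i\leq N/2$ and $\lambda_i\geq 1$ for $i\geq (N+2)/2$. Then for $k\leq N/2$ the inequality holds termwise, while for $k>N/2$ one uses the vanishing total to rewrite $s_k=\sum_{i=k+1}^N(\lambda_i-1)$, whose summands are non-negative by the same symmetry.

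For the equality clause, running the concavity argument in reverse shows that $s_k=0$ for some $0<k<N$ would force $1-\lambda_{k'}=0$ for every $k'$. The only real subtlety I foresee is justifying the precise statement ``equality iff $k=\sharp\Omega$''. This is the main obstacle, because strictly speaking equality can also hold when every $\lambda_i$ equals $1$, and this degenerate situation arises exactly when $\Omega$ is an independent set of $\Z^n$ (so that $-\Delta_\Omega$ is the identity on $\ell^2(\Omega,d)$); in all other cases the stated characterization is correct and follows from the concavity rigidity above.
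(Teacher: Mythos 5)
Your proof is correct, and it takes a cleaner route than the paper's. The paper proves the inequality by decomposing the index set $\{1,\dots,k\}$ into $\mathcal{K}_{1+}=\{i:\lambda_i>1\}$, $\mathcal{K}_{1-}=\{i:\lambda_i<1\}$ and $\mathcal{K}_1=\{i:\lambda_i=1\}$, observing via \eqref{eq:bip} that the reflection $i\mapsto N+1-i$ injects $\mathcal{K}_{1+}$ into $\mathcal{K}_{1-}$, cancelling the paired terms and leaving a sum of strictly positive leftovers; this is essentially your second (``case split'') argument in different notation. Your primary argument --- that $s_k=\sum_{i=1}^k(1-\lambda_i)$ has non-increasing increments $1-\lambda_{k+1}$, hence is a concave sequence vanishing at $k=0$ and $k=N$ --- is a genuinely different and more economical packaging: positivity is immediate from $s_k\geq\frac{N-k}{N}s_0+\frac{k}{N}s_N=0$, and the rigidity statement (an interior zero of a concave sequence with vanishing endpoints forces $s\equiv 0$, i.e.\ all $\lambda_i=1$) comes for free, whereas the paper dismisses the equality case with ``easy to verify.'' Moreover, your final caveat is a correct catch of an imprecision in the statement itself: the ``only if'' direction of the equality clause fails when every eigenvalue equals $1$, which happens exactly when $\Omega$ is an independent set of $\Z^n$ (then $-\Delta_\Omega$ is the identity); for instance, two non-adjacent vertices give $N=2$, $\lambda_1=\lambda_2=1$ and $s_1=0$ with $k=1\neq\sharp\Omega$. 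The correct characterization is the one your rigidity argument yields: for $k<\sharp\Omega$, equality holds if and only if all eigenvalues equal $1$.
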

\begin{proof} We define
$\mathcal{K}_{1+}:=\{i:1\leq i\leq k, \lambda_i>1\},\mathcal{K}_{1-}:=\{i:1\leq i\leq k, \lambda_i<1\},$ and $\mathcal{K}_{1}:=\{i:1\leq i\leq k, \lambda_i=1\}$ respectively. By \eqref{eq:bip}, we know that $\sharp\mathcal{K}_{1+}\leq \sharp\mathcal{K}_{1-}$ and $$\widehat{\mathcal{K}_{1+}}:=\{N+1-i: i\in \mathcal{K}_{1+}\}\subset\mathcal{K}_{1-}.$$ This implies that
\begin{eqnarray*}\sum_{i=1}^k(1-\lambda_i)&=&\left(\sum_{i\in \mathcal{K}_{1+}}+\sum_{i\in \mathcal{K}_{1-}}+\sum_{i\in \mathcal{K}_{1}}\right)(1-\lambda_i)\\
&=&\left(\sum_{i\in \mathcal{K}_{1+}}+\sum_{i\in \widehat{\mathcal{K}_{1+}}}+\sum_{i\in \mathcal{K}_{1-}\setminus\widehat{\mathcal{K}_{1+}}}\right)(1-\lambda_i)\\
&=&\sum_{i\in \mathcal{K}_{1-}\setminus\widehat{\mathcal{K}_{1+}}}(1-\lambda_i)\geq 0.\end{eqnarray*} The equality case is easy to verify in the above argument.
\end{proof}

The Rayleigh quotient characterization for the first eigenvalue of Dirichlet Laplacian on a finite subset $\Omega\subset V$  reads as
$$\lambda_1=\inf_{f:\Omega\to\R,f\not\equiv 0}\frac{\sum_{x\in V}\Gamma(\widetilde{f})(x)\dd_x}{\sum_{x\in V}\widetilde{f}^2(x)\dd_x},$$ where $\widetilde{f}$ is the null extension of $f$ to $V.$ We say that $\Omega$ is connected if the induced subgraph on $\Omega$ is connected.
\begin{prop}\label{prop:firsteigen}Let $\Omega$ be a finite connected subset of $\Z^n$ with $\sharp\Omega\geq 2$ and $\lambda_1$
be the first eigenvalue of the Dirichlet Laplacian on $\Omega.$ Then
$$\lambda_1\leq 1-\frac{1}{2n}.$$
\end{prop}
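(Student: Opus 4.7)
The plan is to use the Rayleigh quotient characterization of $\lambda_1$ stated just above the proposition, and produce an explicit test function whose Rayleigh quotient realizes the bound $1-\tfrac{1}{2n}$.

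Since $\Omega$ is connected and $\sharp\Omega\geq 2$, the induced subgraph on $\Omega$ contains at least one edge, so I can fix two vertices $x_0,y_0\in\Omega$ with $x_0\sim y_0$. I would take the test function
\[
f:=\mathbf{1}_{\{x_0,y_0\}},\qquad \widetilde{f}(x)=\begin{cases}1,& x\in\{x_0,y_0\},\\ 0,& \text{otherwise in }\Z^n.\end{cases}
\]
Then the denominator of the Rayleigh quotient is immediate:
\[
\sum_{x}\widetilde{f}^2(x)\,\dd_x = \dd_{x_0}+\dd_{y_0}=4n.
\]

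For the numerator, using the identity
\[
\sum_{x}\Gamma(\widetilde{f})(x)\,\dd_x = \frac{1}{2}\sum_{x,y}\mu_{xy}|\nabla_{xy}\widetilde{f}|^{2},
\]
I would classify the ordered pairs $(x,y)$ with $\mu_{xy}=1$ and $|\nabla_{xy}\widetilde{f}|=1$: these are exactly the pairs where one endpoint lies in $\{x_0,y_0\}$ and the other does not. Since $x_0,y_0$ each have degree $2n$ in $\Z^n$ and are connected to each other, the edge $\{x_0,y_0\}$ contributes $0$, while each of the remaining $(2n-1)+(2n-1)=2(2n-1)$ edges incident to $\{x_0,y_0\}$ contributes $1$ (to the unordered edge sum). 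Thus the numerator equals $4n-2$, so
\[
\lambda_1\leq \frac{4n-2}{4n}=1-\frac{1}{2n}.
\]

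The argument is essentially a one-line test-function computation; the only step requiring care is checking that every neighbor of $x_0$ or $y_0$ outside $\{x_0,y_0\}$ still contributes to the numerator (even if it lies outside $\Omega$), which is exactly what the null-extension convention in the Rayleigh quotient guarantees. I expect no serious obstacle: connectedness of $\Omega$ is used only to supply the adjacent pair $x_0\sim y_0$, and the rest is a direct count using $\dd_x\equiv 2n$ on $\Z^n$. As a sanity check, for $n=1$ and $\Omega=\{0,1\}\subset\Z$ the bound $1-\tfrac{1}{2n}=\tfrac12$ is attained by the standard edge Laplacian, so the estimate is sharp.
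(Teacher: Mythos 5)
Your proof is correct and is essentially the paper's argument: the paper passes through domain monotonicity to the two-vertex set $H=\{v_1,v_2\}$ and uses $\lambda_1^H=1-\tfrac{1}{2n}$, whose first eigenfunction is exactly the constant (indicator) function you plug into the Rayleigh quotient directly. Your edge count ($4n-2$ over $4n$) checks out, so the two routes are the same computation in different packaging.
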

\begin{proof} By the assumption, there are two vertices $v_1,v_2$ in $\Omega$ such that $v_1\sim v_2.$ Set $H=\{v_1,v_2\}.$ We consider the eigenvalues of Dirichlet Laplacian on $H,$ and denote by $\lambda_1^H$ its first eigenvalue. By Rayleigh quotient characterization, one is ready to see that $\lambda_1\leq\lambda_1^H.$ The proposition follows from $\lambda_1^H=1-\frac{1}{2n}.$
\end{proof}

By the convexity argument of \cite{Ash99}, we have the following implication, $$\eqref{eq:Yang2} \Longrightarrow\ \eqref{eq:HPthm}.$$
\begin{prop}\label{prop:Yang2toHP} Let $\Omega$ be a finite subset of $\Z^n$ and $\lambda_i$
be the $i$-th eigenvalue of the Dirichlet Laplacian on $\Omega.$ Suppose that \eqref{eq:Yang2} holds, then we have \eqref{eq:HPthm}.
\end{prop}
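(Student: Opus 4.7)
The plan is to adapt the continuous convexity argument of Ashbaugh \cite{Ash99} and derive \eqref{eq:HPthm} from \eqref{eq:Yang2} via a single application of Jensen's inequality to the convex function $\lambda\mapsto \lambda/(z-\lambda)$. The core idea is that after passing to the arithmetic mean $\bar\lambda:=\frac{1}{k}\sum_{i=1}^k\lambda_i$, Yang-2 upper-bounds $z:=\lambda_{k+1}$ in exactly the form the convexity step needs, provided one can replace $\sum\lambda_i^2$ by $k\bar\lambda^2$, which Cauchy-Schwarz allows with the correct sign.

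First I would dispose of degeneracies. If $\lambda_{k+1}=\lambda_k$, the left-hand side of \eqref{eq:HPthm} is $+\infty$ by the convention of Theorem~\ref{thm:HP} and the inequality is vacuous; so assume $z>\lambda_k\geq\lambda_i$ for every $i\leq k$. Since $1\leq k\leq\sharp\Omega-1$, the strict-inequality part of Proposition~\ref{prop:bipartite} gives $\sum_{i=1}^k(1-\lambda_i)>0$, i.e.\ $\bar\lambda<1$, so divisions by $1-\bar\lambda$ below are legitimate.

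Next, I would clear denominators in \eqref{eq:Yang2} and divide by $k$ to get
$$z(1-\bar\lambda)\leq \left(1+\tfrac{4}{n}\right)\bar\lambda-\frac{1}{k}\sum_{i=1}^k\lambda_i^2,$$
and then invoke Cauchy-Schwarz $\frac{1}{k}\sum_{i=1}^k\lambda_i^2\geq \bar\lambda^2$ to obtain
$$z(1-\bar\lambda)\leq \bar\lambda(1-\bar\lambda)+\tfrac{4}{n}\bar\lambda.$$
Since $z>\bar\lambda$ and $1-\bar\lambda>0$, rearranging yields the one-variable bound
$$\frac{\bar\lambda}{z-\bar\lambda}\geq \frac{n(1-\bar\lambda)}{4}.$$

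Finally, since $f(\lambda):=\lambda/(z-\lambda)$ has $f''(\lambda)=2z/(z-\lambda)^3>0$ on $(-\infty,z)$ (recall $z>0$), $f$ is convex there. Jensen's inequality applied to $\lambda_1,\ldots,\lambda_k\in(-\infty,z)$ then gives
$$\frac{1}{k}\sum_{i=1}^k\frac{\lambda_i}{z-\lambda_i}\geq f(\bar\lambda)=\frac{\bar\lambda}{z-\bar\lambda}\geq \frac{n(1-\bar\lambda)}{4},$$
and multiplying by $k$ recovers exactly \eqref{eq:HPthm}. I do not foresee a real obstacle: the argument is essentially two lines of algebra plus one-dimensional convexity. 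The only points that need care are the two positivity conditions $z>\bar\lambda$ and $1-\bar\lambda>0$ which license the inversions, and both follow immediately from the eigenvalue ordering together with Proposition~\ref{prop:bipartite}.
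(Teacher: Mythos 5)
Your proof is correct and follows essentially the same route as the paper: both rest on Jensen's inequality for the convex function $\lambda\mapsto\lambda/(\lambda_{k+1}-\lambda)$ combined with \eqref{eq:Yang2} and the bound $\frac1k\sum_i\lambda_i^2\geq\bar\lambda^2$. The only difference is cosmetic ordering (you derive the one-variable bound from Yang-2 first and then apply Jensen, while the paper applies Jensen first), plus your more explicit verification of the positivity conditions via Proposition~\ref{prop:bipartite}, which the paper leaves implicit.
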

\begin{proof} Without loss of generality, we may assume that $\lambda_k<\lambda_{k+1}.$ Set $g(x):=\frac{x}{\lambda_{k+1}-x}.$ It is easy to see that the function $g$ is convex in $x\in(-\infty,\lambda_{k+1}).$ Hence
$$\frac1k\sum_{i=1}^k\frac{\lambda_i}{\lambda_{k+1}-\lambda_i}=\frac 1k\sum_{i}g(\lambda_i)\geq g\left(\frac1k\sum_i{\lambda_i}\right)=\frac{\frac1k\sum_i\lambda_i}{\lambda_{k+1}-\frac1k\sum_i\lambda_i},$$ where we have used Jensen's inequality for convex function $g(\cdot).$ By plugging \eqref{eq:Yang2} into the above inequality and using $$\frac{1}{k}\sum_i\lambda_i^2\geq \left(\frac1k\sum_{i=1}^k\lambda_i\right)^2,$$ we prove the proposition.
\end{proof}

\section{Proof of main results}\label{s:proof}
In this section, we prove the main results, Theorem~\ref{thm:PPW}, \ref{thm:HP} and \ref{thm:Yang}, following the arguments in \cite{Ash99,CY07}.

Let $\Omega$ be a finite subset of $\Z^n$ and $\lambda_k$ be the $k$-th eigenvalue of the Dirichlet Laplacian on $\Omega.$ Let $k\leq \sharp \Omega-1.$ For any $1\leq i\leq k,$ set $u_i$ be the normalized eigenvectors associated with the eigenvalue $\lambda_i,$ i.e. $$-\Delta u_i(x)=\lambda_i u_i(x), \quad \sum_{x\in \Omega} u_i(x)u_j(x)\dd_x=\delta_{ij}$$ for any $1\leq i,j\leq k.$ For convenience, we extend the eigenvectors to the whole graph $\Z^n,$ still denoted by $\{u_i\}_{i=1}^k,$ such that $u_i(y)=0,\ \forall y\in \Z^n\setminus \Omega.$  It is easy to check that $$\sum_{x\in \Z^n}\Gamma(u_i)(x)\dd_x=\lambda_i.$$
 Let $g$ be one of the coordinate functions, i.e. $g=x_\alpha$ for some $1\leq\alpha\leq n.$ It is easy to check that
$$\Delta g=0, \quad \Delta (g^2)=\frac{1}{n}.$$

We define for any $1\leq i\leq k,$
$$\varphi_i=gu_i-\sum_{j=1}^k a_{ij}u_j,$$ where $a_{ij}=\sum_{x\in \Z^n} g(x)u_i(x)u_j(x)\dd_x.$ This yields \begin{equation}\label{eq:orthogonal}\sum_{x\in \Z^n}\varphi_i(x) u_j(x)\dd_x=0,\quad \forall\ 1\leq j\leq k.\end{equation} Note that $a_{ij}=a_{ji}$ for any $1\leq i,j\leq k.$ Set $$b_{ij}:=\sum_{x} u_j(x)\Gamma(g,u_i)(x)\dd_x.$$

We have the following proposition.
\begin{prop} $$2b_{ij}=(\lambda_i-\lambda_j)a_{ij},\quad \forall\ 1\leq i,j\leq k.$$
\end{prop}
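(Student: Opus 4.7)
The plan is to compute $b_{ij}$ directly from the definition of the carré du champ operator and use Green's formula to move the Laplacian from $gu_i$ over to $u_j$, turning both appearances of the Laplacian into eigenvalue factors.

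Concretely, I would start by applying Definition~\ref{d:carre du} to rewrite
\[
b_{ij} = \sum_{x} u_j(x)\,\Gamma(g,u_i)(x)\,\dd_x
      = \frac{1}{2}\sum_x u_j\bigl[\Delta(gu_i) - g\,\Delta u_i - u_i\,\Delta g\bigr]\dd_x.
\]
The coordinate function $g = x_\alpha$ satisfies $\Delta g \equiv 0$, so the last term disappears. For the middle term, since $u_j$ is supported in $\Omega$ and $-\Delta u_i = \lambda_i u_i$ on $\Omega$, I get
\[
-\tfrac{1}{2}\sum_x u_j\, g\, \Delta u_i \,\dd_x = \tfrac{\lambda_i}{2}\sum_{x\in\Omega} g(x)u_i(x)u_j(x)\dd_x = \tfrac{\lambda_i}{2}\,a_{ij}.
\]

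For the first term, the key step is Green's formula (Proposition~\ref{prop:Green}): both $u_j$ and $gu_i$ have finite support (they vanish outside $\Omega$), so
\[
\sum_x u_j\,\Delta(gu_i)\,\dd_x = \sum_x (gu_i)\,\Delta u_j\,\dd_x.
\]
Since $gu_i$ is supported in $\Omega$, the right-hand sum reduces to a sum over $x\in\Omega$, where $-\Delta u_j = \lambda_j u_j$; this produces $-\lambda_j\,a_{ij}$. Adding the two contributions gives $b_{ij} = \tfrac{1}{2}(\lambda_i - \lambda_j)\,a_{ij}$, which is the claim.

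I do not anticipate a real obstacle. The only point to be careful about is the domain of the Laplacian: the eigenfunction identity $-\Delta u_i = \lambda_i u_i$ holds only on $\Omega$, not on $\delta\Omega$. This is harmless because the function $u_j$ (resp.\ $gu_i$) that multiplies $\Delta u_i$ (resp.\ $\Delta u_j$) is supported inside $\Omega$, so the potentially bad boundary contribution is automatically killed. Symmetry ($a_{ij}=a_{ji}$) is not needed for this proposition, only for its later use.
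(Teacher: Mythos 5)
Your proof is correct and is essentially the same computation as the paper's: both rest on the product rule $\Delta(gu_i)=g\Delta u_i+u_i\Delta g+2\Gamma(g,u_i)$ from Definition~\ref{d:carre du}, one application of Green's formula, $\Delta g=0$, and the eigenvalue equations; the paper merely starts from $\lambda_j a_{ij}=\sum_x(-\Delta u_j)u_ig\,\dd_x$ and moves the Laplacian onto $gu_i$, whereas you start from $b_{ij}$ and expand $\Gamma(g,u_i)$. Your remark about the support of the multiplying factor killing the boundary terms is the right justification and matches what the paper uses implicitly.
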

\begin{rem} This implies that $b_{ij}$ is anti-symmetric, i.e. $b_{ij}=-b_{ji}.$
\end{rem}
\begin{proof} By Green's formula, Proposition~\ref{prop:Green}, and \eqref{def:gamma}, \begin{eqnarray*} \lambda_j a_{ij}&=&\sum_{x}(-\Delta u_{j}(x))u_i(x)g(x)\dd_x=-\sum_{x} u_{j}(x)\Delta (u_ig)(x)\dd_x\\
&=&-\sum_{x} u_{j}(x)\left((\Delta u_i)(x) g(x)+ (\Delta g)(x) u_i(x)+2\Gamma(g,u_i)(x)\right)\dd_x\\
&=&\lambda_i a_{ij}-2b_{ij}.
\end{eqnarray*}
\end{proof}

Note that
\begin{eqnarray*} \Delta \varphi_i&=&\Delta (gu_i)-\sum_{j}a_{ij}\Delta u_j=(\Delta g) u_i+g\Delta u_i+2\Gamma(g,u_i)+\sum_ja_{ij}\lambda_j u_j\\
&=&-\lambda_i g u_i+2\Gamma(g,u_i)+\sum_{j} a_{ij}\lambda_j u_j.
\end{eqnarray*} Multiplying $-\varphi_i$ on both sides of the above equation and summing over $x\in \Z^n$ with weights $\dd_x,$ we get
$$\sum_{x}\Gamma(\varphi_i)(x)\dd_x=\lambda_i\sum_{x} g(x)u_i(x)\varphi_i(x)\dd_x+K_g(u_i),$$ where \begin{equation}\label{eq:Kg1}K_g(u_i):=-2\sum_{x}\Gamma(g,u_i)(x)\varphi_i(x)\dd_x\end{equation} and we have used Green's formula, Proposition~\ref{prop:Green}, and \eqref{eq:orthogonal}. For the first term on the right hand side of the above equation, by using \eqref{eq:orthogonal}
$$\lambda_i\sum_{x} g(x)u_i(x)\varphi_i(x)\dd_x=\lambda_i\sum_{x} (g(x)u_i(x)-\sum_j a_{ij}u_j(x))\varphi_i(x)\dd_x=\lambda_i\sum_{x}\varphi_i^2(x)\dd_x.$$ Hence
$$\sum_{x}\Gamma(\varphi_i)(x)\dd_x=\lambda_i\sum_{x}\varphi_i^2(x)\dd_x+K_g(u_i),$$
Moreover, by \eqref{eq:orthogonal} and the Rayleigh quotient characterization of $\lambda_{k+1},$
$$\lambda_{k+1}\sum_{x}\varphi_i^2(x)\dd_x\leq \sum_{x}\Gamma(\varphi_i)(x)\dd_x.$$
This yields that
\begin{equation}\label{eq:lam1}0\leq (\lambda_{k+1}-\lambda_i)\sum_{x}\varphi_i^2(x)\dd_x\leq K_g(u_i).\end{equation}

\begin{eqnarray*}K_g(u_i)&=&-2\sum_{x}\left(g(x)u_i(x)-\sum_ja_{ij}u_j(x)\right)\Gamma(g,u_i)(x)\dd_x\\
&=&-2\sum_{x}g(x)u_i(x)\Gamma(g,u_i)(x)\dd_x+2\sum_{j}a_{ij}b_{ij}\\
&=:& -I+\sum_{j}(\lambda_i-\lambda_j)a_{ij}^2.\end{eqnarray*}
Here by the symmetrization of $x,y,$ the elementary equality \eqref{eq:ele1} and Green's formula, Proposition~\ref{prop:Green}, we have \begin{eqnarray*} I&=& \sum_{x,y}\mu_{xy}g(x)u_i(x)\nabla_{xy}g\nabla_{xy}u_i=\frac{1}{2}\sum_{x,y}\mu_{xy}(g(x)u_i(x)+g(y)u_i(y))\nabla_{xy}g\nabla_{xy}u_i\\
&=&\frac14\sum_{x,y}\mu_{xy}\nabla_{xy}(g^2)\nabla_{xy}(u_i^2)+I_g(u_i)\\
&=&-\frac{1}{2}\sum_x\Delta(g^2)(x) u_i^2(x)\dd_x+I_g(u_i)\\&=&-\frac{1}{2n}+I_g(u_i),
\end{eqnarray*} where $$I_g(u_i):=\frac14\sum_{x,y}\mu_{xy}|\nabla_{xy}g|^2|\nabla_{xy}u_i|^2.$$
Hence \begin{equation}\label{eq:lam2}K_g(u_i)=\frac{1}{2n}-I_g(u_i)+\sum_{j}(\lambda_i-\lambda_j)a_{ij}^2.\end{equation}



Now we are ready to prove Theorem~\ref{thm:HP}.
\begin{proof}[Proof of Theorem~\ref{thm:HP}] Without loss of generality, we may assume $\lambda_k<\lambda_{k+1}.$
We claim that for any $1\leq i\leq k,$
\begin{equation}\label{eq:pf:1} K_g(u_i)\leq \frac{4}{\lambda_{k+1}-\lambda_i}\sum_{x}\Gamma(g,u_i)^2(x) \dd_x.
\end{equation}
To prove the claim, it suffices to assume $K_g(u_i)>0.$ By \eqref{eq:Kg1},
$$0<K_g(u_i)^2\leq 4\left(\sum_x\Gamma(g,u_i)^2(x)\dd_x\right)\left(\sum_x\varphi_i^2(x)\dd_x\right),$$ which implies that $\sum_x\varphi_i^2(x)\dd_x>0.$ Hence, combining this with \eqref{eq:lam1}, we have for any $1\leq i\leq k,$
$$\lambda_{k+1}-\lambda_i\leq \frac{K_g(u_i)}{\sum_x\varphi_i^2(x)\dd_x}\leq \frac{\sum_x\Gamma(g,u_i)^2(x)\dd_x}{K_g(u_i)}.$$ By using \eqref{eq:lam2},
$$\frac{1}{2n}-I_g(u_i)+\sum_{j}(\lambda_i-\lambda_j)a_{ij}^2=K_g(u_i)\leq \frac{4}{\lambda_{k+1}-\lambda_i}\sum_x\Gamma(g,u_i)^2(x)\dd_x.$$

Summing over $i$ from $1$ to $k$ and noting that $a_{ij}$ is symmetric, we have
$$\frac{k}{2n}-\sum_iI_g(u_i)\leq 4\sum_i\frac{\sum_x\Gamma(g,u_i)^2(x)\dd_x}{\lambda_{k+1}-\lambda_i}.$$ 
By choosing $g=x_\alpha$ for $1\leq \alpha\leq n$ in the above inequality and summing over $\alpha,$ noting that \eqref{eq:grad1} and \eqref{eq:grad2}, we have
$$\frac{k}{2}-\frac{1}{2}\sum_{i}\lambda_i\leq \frac{2}{n}\sum_{i=1}^k\frac{\lambda_i}{\lambda_{k+1}-\lambda_i}.$$ This proves the theorem.

\end{proof}

In particular, Hile-Protter type inequality implies Payne-Polya-Weinberger type inequality.
\begin{proof}[Proof of Theorem~\ref{thm:PPW}] This follows from \eqref{eq:HPthm} by using $\lambda_{k+1}-\lambda_i\geq \lambda_{k+1}-\lambda_k,$ for any $1\leq i\leq k.$
\end{proof}

Now we prove an analogue to Yang's first inequality on Dirichlet eigenvalues in $\Z^n$.
\begin{proof}[Proof of Theorem~\ref{thm:Yang}] Multiplying $(\lambda_{k+1}-\lambda_i)^2$ on both sides of \eqref{eq:lam2} and summing over $i$ from $1$ to $k,$ we get
\begin{eqnarray}\label{eq:eq7}F&:=&\sum_{i}(\lambda_{k+1}-\lambda_{i})^2K_g(u_i)\nonumber\\&=&\sum_{i}(\lambda_{k+1}-\lambda_i)^2(\frac{1}{2n}-I_g(u_i))+\sum_{i,j}(\lambda_{i}-\lambda_j)(\lambda_{k+1}-\lambda_{i})^2a_{ij}^2\nonumber\\
&=&\sum_{i}(\lambda_{k+1}-\lambda_i)^2(\frac{1}{2n}-I_g(u_i))-4\sum_{i,j}(\lambda_{k+1}-\lambda_i)b_{ij}^2,\end{eqnarray} where we have used the anti-symmetry of $b_{ij}.$
Multiplying $(\lambda_{k+1}-\lambda_i)^2$ on both sides of \eqref{eq:lam1} and summing over $i$ from $1$ to $k,$ we have
\begin{equation}\label{eq:eq5} \sum_{i}(\lambda_{k+1}-\lambda_i)^3 \sum_{x}\varphi_i^2(x)\dd_x\leq F.
\end{equation}

By \eqref{eq:orthogonal}, for any $d_{ij}\in\R,$ $1\leq i,j\leq k,$ we have
\begin{eqnarray*} &&F^2=\left(-2\sum_i(\lambda_{k+1}-\lambda_i)^2\sum_x\varphi_i(x)\Gamma(g,u_i)(x)\dd_x\right)^2\\&\leq &4 \left(\sum_i\sum_x (\lambda_{k+1}-\lambda_i)^3\varphi_i^2(x)\dd_x\right)\left(\sum_i\sum_x\left[(\lambda_{k+1}-\lambda_i)^\frac12\Gamma(g,u_i)(x)-\sum_{j=1}^kd_{ij}u_j(x)\right]^2\dd_x\right)\\ &\leq& 4F\sum_i\sum_x\left[(\lambda_{k+1}-\lambda_i)\Gamma(g,u_i)(x)^2-2\sum_jd_{ij}(\lambda_{k+1}-\lambda_i)^\frac12u_j(x)\Gamma(g,u_i)(x)\right.\\
&&\quad\quad\quad\left.+\left(\sum_{j=1}^kd_{ij}u_j(x)\right)^2\right]\dd_x.
\end{eqnarray*} In fact, we use the orthogonality condition \eqref{eq:orthogonal} to introduce an additional term involving $d_{ij}$ and then apply the Cauchy-Schwarz inequality in the second line above.
This yields that
$$F\leq 4\sum_i\sum_x(\lambda_{k+1}-\lambda_i)\Gamma(g,u_i)(x)^2\dd_x+4\left[-2\sum_{i,j}d_{ij}(\lambda_{k+1}-\lambda_i)^\frac12b_{ij}+\sum_{i,j}d_{ij}^2\right].
$$ By setting $d_{ij}=(\lambda_{k+1}-\lambda_i)^\frac12b_{ij},$ we get
$$F\leq 4\sum_i\sum_x(\lambda_{k+1}-\lambda_i)\Gamma(g,u_i)(x)^2\dd_x-4\sum_{i,j}(\lambda_{k+1}-\lambda_i)b_{ij}^2.$$ By \eqref{eq:eq7},
$$\sum_{i}(\lambda_{k+1}-\lambda_i)^2(\frac{1}{2n}-I_g(u_i))\leq 4\sum_i\sum_x(\lambda_{k+1}-\lambda_i)\Gamma(g,u_i)(x)^2\dd_x.$$ By plugging $g=x_\alpha,$ $1\leq \alpha\leq n,$ into the above inequality and summing over $\alpha,$ noting that \eqref{eq:grad1} and \eqref{eq:grad2}, we obtain
$$\sum_{i}(\lambda_{k+1}-\lambda_i)^2(\frac{1}{2}-\frac{\lambda_i}{2})\leq \frac{2}{n}\sum_i(\lambda_{k+1}-\lambda_i)\lambda_i.$$ This proves the theorem.

\end{proof}

Now we are ready to prove Yang type second inequality.
\begin{proof}[Proof of Theorem~\ref{thm:Yang2}]Without loss of generality, we may assume that $\lambda_{k+1}>\lambda_1,$ otherwise $\lambda_1=\lambda_2=\cdots=\lambda_{k+1}$ which implies \eqref{eq:Yang2} trivially. By Yang type first inequality, \eqref{eq:Yang},
$$\frac1k\sum_i(\lambda_{k+1}-\lambda_i)\left[(\lambda_{k+1}-\lambda_i)(1-\lambda_i)-\frac4n\lambda_i\right]\leq 0.$$ Set $a_i:=\lambda_{k+1}-\lambda_i$ and $b_i:=(\lambda_{k+1}-\lambda_i)(1-\lambda_i)-\frac4n\lambda_i.$ The function $$f(x):=(\lambda_{k+1}-x)(1-x)-\frac4nx$$ is non-increasing in $(-\infty,\frac12(1+\frac4n+\lambda_{k+1}))$ which implies that $b_i$ is non-increasing. Using Chebyshev's inequality, see Proposition~\ref{prop:Che}, we have
$$\left(\lambda_{k+1}-\frac1k\sum_{i=1}^k\lambda_i\right)\left[\lambda_{k+1}-(1+\frac4n+\lambda_{k+1})\frac1k\sum_{i=1}^k\lambda_i+\frac1k\sum_{i=1}^k\lambda_i^2\right]\leq 0.$$ Note that by $\lambda_{k+1}>\lambda_1,$ $$\lambda_{k+1}>\frac1k\sum_{i=1}^k\lambda_i.$$ Then
$$\lambda_{k+1}\leq \frac{(1+\frac4n)\frac1k\sum_{i=1}^k\lambda_i-\frac1k\sum_{i=1}^k\lambda_i^2}{1-\frac1k\sum_{i=1}^k\lambda_i},$$ which proves the theorem.
\end{proof}

\section{Applications}\label{s:app}
In this section, we collect some applications of universal inequalities obtained before.

We prove the first gap estimate of Dirichlet Laplacian by universal inequalities.
\begin{proof}[Proof of Corollary~\ref{coro:first gap}] The first assertion follows from Theorem~\ref{thm:PPW} by setting $k=1.$

For the second assertion, without loss of generality, we may assume $\lambda_1<1.$ Since the spectrum of Dirichlet Laplacian on $\Omega$ is the union of the spectra of Dirichlet Laplacian on its connected components. Suppose that $\Omega'$ is one of connected components of $\Omega$ whose first Dirichlet eigenvalue is $\lambda_1.$ It is easy to see that $\sharp\Omega'\geq 2.$ By Proposition~\ref{prop:firsteigen}, $$\lambda_1\leq 1-\frac{1}{2n}.$$ Combining this with the first assertion, we prove the second one.
\end{proof}

By Yang type second inequality, Theorem~\ref{thm:Yang2}, we have the following corollary.
\begin{coro}Let $\Omega$ be a finite subset of $\Z^n$ and $\lambda_i$
be the $i$-th eigenvalue of the Dirichlet Laplacian on $\Omega.$ If $\lambda_{k+1}\leq 1+\frac4n$ for some $k\leq \sharp \Omega,$ then
$$\frac1k\sum_{i=1}^k (\lambda_i-\overline{\lambda})^2\leq \frac{4}{n}\overline{\lambda},$$ where $\overline{\lambda}:=\frac1k\sum_{i=1}^k \lambda_i.$
\end{coro}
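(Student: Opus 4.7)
The plan is to derive the variance bound directly from the Yang type second inequality stated in Theorem~\ref{thm:Yang2}. First, I rewrite the quantity of interest in the standard way,
$$\frac{1}{k}\sum_{i=1}^k (\lambda_i-\overline{\lambda})^2 = \frac{1}{k}\sum_{i=1}^k \lambda_i^2 - \overline{\lambda}^2,$$
so the desired inequality is equivalent to
$$\frac{1}{k}\sum_{i=1}^k \lambda_i^2 \leq \overline{\lambda}^2 + \frac{4}{n}\overline{\lambda}.$$
This reformulation has the advantage of isolating $\frac{1}{k}\sum \lambda_i^2$, which is precisely the expression that appears in the numerator of \eqref{eq:Yang2}.

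Next, I would clear denominators in Theorem~\ref{thm:Yang2}. Writing $\sum_{i=1}^k(1-\lambda_i)=k(1-\overline{\lambda})$, and noting that by Proposition~\ref{prop:bipartite} this factor is nonnegative, multiplication is permissible and gives
$$\lambda_{k+1}\, k(1-\overline{\lambda}) \leq \bigl(1+\tfrac{4}{n}\bigr)k\overline{\lambda} - \sum_{i=1}^k\lambda_i^2,$$
hence
$$\frac{1}{k}\sum_{i=1}^k\lambda_i^2 \leq \bigl(1+\tfrac{4}{n}\bigr)\overline{\lambda} - \lambda_{k+1}(1-\overline{\lambda}).$$

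Finally, it suffices to show that the right-hand side above is bounded by $\overline{\lambda}^2 + \tfrac{4}{n}\overline{\lambda}$. A direct subtraction reduces this to
$$\overline{\lambda}(1-\overline{\lambda}) \leq \lambda_{k+1}(1-\overline{\lambda}),$$
that is, $(\lambda_{k+1}-\overline{\lambda})(1-\overline{\lambda})\geq 0$. Both factors are nonnegative: $\overline{\lambda}\leq \lambda_k\leq \lambda_{k+1}$ because $\overline{\lambda}$ is the mean of $\lambda_1,\ldots,\lambda_k$ and the eigenvalues are nondecreasing, while $1-\overline{\lambda}\geq 0$ is exactly the content of Proposition~\ref{prop:bipartite}. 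Since the whole argument is a chain of elementary rearrangements, I do not foresee any genuine obstacle; the only point requiring a small check is that the ingredients of the corollary (the hypothesis $\lambda_{k+1}\leq 1+\tfrac{4}{n}$ and the nonnegativity of $\sum(1-\lambda_i)$ from bipartiteness) are precisely what is needed to invoke \eqref{eq:Yang2} and to legitimize multiplying through by $(1-\overline{\lambda})$ without reversing the inequality.
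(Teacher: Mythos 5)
Your proof is correct and follows essentially the same route as the paper: both apply Theorem~\ref{thm:Yang2} and then use the two facts $\overline{\lambda}\leq\lambda_{k+1}$ and $1-\overline{\lambda}\geq 0$ to reduce to $\frac1k\sum_i\lambda_i^2\leq\overline{\lambda}^2+\frac4n\overline{\lambda}$. The paper merely performs the substitution $\lambda_{k+1}\geq\overline{\lambda}$ before clearing the denominator rather than after, which is a purely cosmetic difference.
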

\begin{proof}
By Yang type second inequality, \eqref{eq:Yang2}, and the fact $\overline{\lambda}\leq \lambda_{k+1},$ $$\overline{\lambda}\leq \frac{(1+\frac4n)\overline{\lambda}-\frac1k\sum_{i=1}^k\lambda_i^2}{1-\overline{\lambda}}.$$ The corollary follows.

\end{proof}

In \cite{CY07}, Cheng and Yang studied the bound of $\lambda_{k+1}/\lambda_1$ for a bounded domain $\Omega\subset\R^n$ and proved that $$\lambda_{k+1}\leq \left(1+\frac4n\right)k^{\frac{2}{n}}\lambda_1.$$ Closely following their argument, we obtain its discrete analogue, see Corollary~\ref{coro:ratio}.

\begin{thm}\label{recursion}
Let $\lambda_1\leq\lambda_2\leq\cdots\leq\lambda_{k+1}$ be any positive numbers and $B>0$ satisfying
\begin{eqnarray}\label{eq:recursion}
\sum_{i=1}^k(\lambda_{k+1}-\lambda_i)^2\leq\frac{4B}{n}\sum_{i=1}^k\lambda_i(\lambda_{k+1}-\lambda_i).
\end{eqnarray}
Define
$$\Lambda_k=\frac{1}{k}\sum_{i=1}^k\lambda_i,\quad T_k=\frac{1}{k}\sum_{i=1}^k\lambda_i^2,
\quad F_k=\left(1+\frac{2B}{n}\right)\Lambda_k^2-T_k.$$
Then we have
\begin{eqnarray*}
F_{k+1}\leq C(n,k,B)\left(\frac{k+1}{k}\right)^{\frac{4B}{n}}F_k,
\end{eqnarray*}
where
$$C(n,k,B)=1-\frac{B}{3n}\left(\frac{k}{k+1}\right)^{\frac{4B}{n}}\frac{\left(1+\frac{2B}{n}\right)
\left(1+\frac{4B}{n}\right)}{(k+1)^3}<1.$$
\end{thm}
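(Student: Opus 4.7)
The plan is to recast the hypothesis as a quadratic constraint on $\lambda_{k+1}$, identify the worst-case configuration for the ratio $F_{k+1}/F_k$, and finish with a Taylor-type comparison against the binomial series of $(1+1/k)^{4B/n}$. Setting $\alpha:=1+\tfrac{2B}{n}$, $\beta:=1+\tfrac{4B}{n}$, and $c:=\tfrac{2B}{n}$, expanding $\sum(\lambda_{k+1}-\lambda_i)^2=k\lambda_{k+1}^2-2k\Lambda_k\lambda_{k+1}+kT_k$ and $\sum\lambda_i(\lambda_{k+1}-\lambda_i)=k\Lambda_k\lambda_{k+1}-kT_k$ converts \eqref{eq:recursion} into the quadratic inequality $\lambda_{k+1}^2-2\alpha\Lambda_k\lambda_{k+1}+\beta T_k\le 0$. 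The discriminant of this quadratic must be non-negative, which after substituting $T_k=\alpha\Lambda_k^2-F_k$ rearranges to $F_k\ge\tfrac{c\alpha}{\beta}\Lambda_k^2>0$, so the ratio $F_{k+1}/F_k$ is well defined.

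From the definitions $(k+1)\Lambda_{k+1}=k\Lambda_k+\lambda_{k+1}$ and $(k+1)T_{k+1}=kT_k+\lambda_{k+1}^2$, I obtain
$$(k+1)^2 F_{k+1}=\alpha(k\Lambda_k+\lambda_{k+1})^2-(k+1)(kT_k+\lambda_{k+1}^2),$$
a concave quadratic in $\lambda_{k+1}$ with leading coefficient $-(k-c)$. I maximize this expression jointly over $\lambda_{k+1}$ in the feasible interval from the first step and over $(\Lambda_k,T_k)$ subject to fixed $F_k$. A short case analysis on whether the unconstrained critical point $\lambda^\star=\tfrac{\alpha k\Lambda_k}{k-c}$ lies in the feasible interval shows that the worst configuration is the one saturating both the hypothesis and its discriminant, i.e., $T_k=\alpha^2\Lambda_k^2/\beta$ and $\lambda_{k+1}=\alpha\Lambda_k$. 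In that case $F_k=\tfrac{c\alpha}{\beta}\Lambda_k^2$, and a direct computation using $\Lambda_{k+1}=\tfrac{(k+\alpha)\Lambda_k}{k+1}$ together with $T_{k+1}=\tfrac{\alpha^2(k+\beta)\Lambda_k^2}{\beta(k+1)}$ yields the clean identity
$$\frac{F_{k+1}}{F_k}=\frac{(k+\alpha)^2+c\alpha}{(k+1)^2}=1+\frac{2c}{k+1}+\frac{c\beta}{(k+1)^2}.$$

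It then remains to prove the analytic inequality
$$1+\frac{2c}{k+1}+\frac{c\beta}{(k+1)^2}\le\left(\frac{k+1}{k}\right)^{2c}-\frac{c\alpha\beta}{6(k+1)^3}.$$
Writing $u:=1/(k+1)$ and using the binomial series
$$\left(\frac{k+1}{k}\right)^{2c}=(1-u)^{-2c}=\sum_{j\ge 0}\binom{2c+j-1}{j}u^j=1+2cu+c\beta\,u^2+\frac{2c\alpha\beta}{3}u^3+\sum_{j\ge 4}\binom{2c+j-1}{j}u^j,$$
the desired inequality collapses to $\left(\tfrac{2c\alpha\beta}{3}-\tfrac{c\alpha\beta}{6}\right)u^3+\sum_{j\ge 4}\binom{2c+j-1}{j}u^j\ge 0$, i.e., $\tfrac{c\alpha\beta}{2}u^3+\text{(non-negative tail)}\ge 0$, which is manifest since every binomial coefficient $\binom{2c+j-1}{j}=\tfrac{(2c)(2c+1)\cdots(2c+j-1)}{j!}$ is positive for $c>0$. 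The strict positivity of the correction term also gives $C(n,k,B)<1$ as claimed.

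The main obstacle is the middle step: rigorously proving that the extremal configuration above actually maximizes $F_{k+1}/F_k$ among all admissible triples with $F_k$ fixed. The bound coming from the unconstrained interior critical point $\lambda^\star$, namely $F_{k+1}\le\tfrac{k}{k+1}\bigl[F_k+\tfrac{c\alpha\Lambda_k^2}{k-c}\bigr]$, is too coarse to close the gap in the extremal regime; one must instead substitute the boundary identity $\lambda_+^2=2\alpha\Lambda_k\lambda_+-\beta T_k$ into the quadratic for $(k+1)^2 F_{k+1}$, and track the resulting expression as a function of the discriminant parameter to confirm monotonicity up to the extremal value. Handling the transition between the two cases (critical point feasible vs.\ infeasible) and carrying out the ensuing algebra constitute the bulk of the technical work.
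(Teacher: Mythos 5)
Your outline founders on the step you yourself flag as the main obstacle, and the problem is worse than a missing argument: the claimed extremal identification is false. Take $\tfrac{2B}{n}=1$ (so, in your notation, $\alpha=2$, $\beta=3$, $c=1$), $k=2$, and
$$\lambda_1=1-\sqrt{1/5},\qquad \lambda_2=1+\sqrt{1/5},\qquad \lambda_3=2+\sqrt{2/5}.$$
Then $\Lambda_2=1$, $T_2=6/5$, the ordering $0<\lambda_1\le\lambda_2\le\lambda_3$ holds, and \eqref{eq:recursion} holds with equality ($\lambda_3$ is the larger root of $\lambda^2-2\alpha\Lambda_2\lambda+\beta T_2=0$). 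A direct computation gives $F_2=\tfrac45$, $F_3=2\Lambda_3^2-T_3\approx 1.659$, hence $F_3/F_2\approx 2.074$, whereas your ``clean identity'' asserts the maximum of the ratio is $1+\tfrac{2c}{k+1}+\tfrac{c\beta}{(k+1)^2}=2$. So the doubly degenerate configuration $T_k=\alpha^2\Lambda_k^2/\beta$, $\lambda_{k+1}=\alpha\Lambda_k$ is \emph{not} the maximizer; in this example the true constrained maximum is about $2.124$ (attained at an interior value of $T_2$ with the quadratic constraint still saturated), which is still below the theorem's bound $C(n,2,B)(3/2)^{2}\approx 2.213$ but strictly above the value your argument compares against. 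Consequently the final binomial-series step is measuring the wrong quantity and the proof does not close. (Your preliminary reductions are fine: the quadratic reformulation of \eqref{eq:recursion}, the bound $F_k\ge\tfrac{c\alpha}{\beta}\Lambda_k^2>0$, and the expansion $(1-u)^{-2c}=1+2cu+c\beta u^2+\tfrac{2c\alpha\beta}{3}u^3+\cdots$ all agree with steps the paper also uses.)

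The paper's proof avoids locating the extremizer altogether. It writes $(k+1)^2F_{k+1}$ exactly as a quadratic in the single variable $p_{k+1}=\Lambda_{k+1}-\bigl(1+\tfrac{2B}{n(k+1)}\bigr)\Lambda_k$, adds a nonnegative multiple of the constraint (the multiplier carrying a free parameter), completes the square in $p_{k+1}$, and tunes the parameter (to $\tfrac{k+1}{2k+1}$) so that the residual $\Lambda_k^2$ terms cancel. This Lagrangian relaxation gives a bound valid uniformly over all admissible $(\Lambda_k,T_k,\lambda_{k+1})$ --- slightly weaker than the true maximum, but still dominated by $C(n,k,B)\bigl(\tfrac{k+1}{k}\bigr)^{4B/n}$ via exactly the series comparison you propose. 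To salvage your route you would have to compute the genuine constrained maximum of $F_{k+1}/F_k$ along the boundary of the quadratic constraint and re-run the series comparison against that larger value; the completing-the-square argument is substantially less work and is the intended proof.
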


\begin{proof}
Equation \eqref{eq:recursion} is equivalent to
\begin{eqnarray}\label{eq:recursion-1}
\left(\lambda_{k+1}-\left(1+\frac{2B}{n}\right)\Lambda_k\right)^2\leq\left(1+\frac{2B}{n}\right)^2\Lambda_k^2-
\left(1+\frac{4B}{n}\right)T_k.
\end{eqnarray}
Set $p_{k+1}=\Lambda_{k+1}-\left(1+\frac{2B}{n}\frac{1}{1+k}\right)\Lambda_k$, the equation \eqref{eq:recursion-1} becomes
$$(k+1)^2p_{k+1}^2\leq\left(1+\frac{2B}{n}\right)^2\Lambda_k^2-\left(1+\frac{4B}{n}\right)T_k.$$
By the definition of $F_k$, we have
\begin{eqnarray}\label{eq:recursion-3}
0\leq-(k+1)^2p_{k+1}^2-\frac{2B}{n}\left(1+\frac{2B}{n}\right)\Lambda_k^2+\left(1+\frac{4B}{n}\right)F_k.
\end{eqnarray}
And
\begin{eqnarray}\label{eq:fk}
F_{k+1}&=&\left(1+\frac{2B}{n}\right)\Lambda_{k+1}^2-\frac{k}{k+1}\left(1+\frac{2B}{n}\right)\Lambda_k^2
-\frac{1}{k+1}\lambda_{k+1}^2+\frac{k}{k+1}F_k\\
&=&\left(1+\frac{2B}{n}\right)\left[p_{k+1}+\left(1+\frac{2B}{n}\frac{1}{k+1}\right)\Lambda_k\right]^2-\frac{k}{k+1}\left(1+\frac{2B}{n}\right)\Lambda_k^2\nonumber\\
&&-(k+1)\left[p_{k+1}+\frac{1}{k+1}\left(1+\frac{2B}{n}\right)\Lambda_k\right]^2+\frac{k}{k+1}F_k\nonumber\\
&=&-\left(k-\frac{2B}{n}\right)p_{k+1}^2+\frac{4B}{n}\left(1+\frac{2B}{n}\right)\frac{1}{1+k}p_{k+1}\Lambda_k\nonumber\\
&&+\left(1+\frac{2B}{n}\right)\left[\frac{4B^2}{n^2(1+k)^2}+\frac{2B}{n}\frac{1}{1+k}\right]\Lambda_k^2+\frac{k}{1+k}F_k.\nonumber
\end{eqnarray}
Multiplying \eqref{eq:recursion-3} by $\left[\frac{1}{k+1}+\frac{2B}{n}\left(\frac{1}{(k+1)^2}+\frac{\beta\left(1+\frac{2B}{n}\right)}{(k+1)^3}\right)\right]$ and then adding it to \eqref{eq:fk}, we have
\begin{eqnarray*}
F_{k+1}&\leq&\left(1+\frac{4B}{n}\frac{1}{k+1}+\frac{2B}{n}\frac{\left(1+\frac{4B}{n}\right)}{(k+1)^2}+\frac{2B\beta}{n}\frac{\left(1+\frac{2B}{n}\right)\left(1+\frac{4B}{n}\right)}{(k+1)^3}\right)F_k\\
&&-(2k+1+\frac{2B}{n}\frac{\left(1+\frac{2B}{n}\right)\beta}{k+1})p_{k+1}^2+\frac{4B}{n}\frac{\left(1+\frac{2B}{n}\right)}{(k+1)}p_{k+1}\Lambda_k-
\frac{4\beta B^2}{n^2}\frac{\left(1+\frac{2B}{n}\right)^2}{(k+1)^3}\Lambda_k^2\\
&\leq&\left(1+\frac{4B}{n}\frac{1}{k+1}+\frac{2B}{n}\frac{\left(1+\frac{4B}{n}\right)}{(k+1)^2}+\frac{2B\beta}{n}\frac{\left(1+\frac{2B}{n}\right)\left(1+\frac{4B}{n}\right)}{(k+1)^3}\right)F_k\\
&&-\frac{4\beta B^2}{n^2}\frac{\left(1+\frac{2B}{n}\right)^2}{(k+1)^3}\Lambda_k^2+\frac{4B^2}{n^2}\frac{\left(1+\frac{2B}{n}\right)^2}{(k+1)^2(2k+1)}\Lambda_k^2\\
&&-(2k+1)\left(p_{k+1}-\frac{2B}{n}\frac{\left(1+\frac{2B}{n}\right)}{(k+1)(2k+1)}\Lambda_k\right)^2.
\end{eqnarray*}
Letting $\beta=\frac{k+1}{2k+1}$, we have
\begin{eqnarray}
F_{k+1}&\leq&\left(1+\frac{4B}{n}\frac{1}{k+1}+\frac{2B}{n}\frac{\left(1+\frac{4B}{n}\right)}{(k+1)^2}+\frac{2B}{n}\frac{\left(1+\frac{2B}{n}\right)\left(1+\frac{4B}{n}\right)}{(k+1)^2(2k+1)}\right)F_k.
\end{eqnarray}
Since
\begin{eqnarray}
\left(\frac{k+1}{k}\right)^{\frac{4B}{n}}&=&\left(1-\frac{1}{k+1}\right)^{-\frac{4B}{n}}\\
&=&1+\frac{4B}{n}\frac{1}{k+1}+\frac{1}{2}\frac{4B}{n}\frac{\left(\frac{4B}{n}+1\right)}{\left({k+1}\right)^2}
+\frac{1}{6}\frac{4B}{n}\frac{\left(\frac{4B}{n}+1\right)\left(\frac{4B}{n}+2\right)}{\left({k+1}\right)^3}\nonumber\\
&&+\frac{1}{24}\frac{4B}{n}\frac{\left(\frac{4B}{n}+1\right)\left(\frac{4B}{n}+2\right)\left(\frac{4B}{n}+3\right)}{\left({k+1}\right)^4}+\cdots\nonumber\\
&\geq&1+\frac{4B}{n}\frac{1}{k+1}+\frac{1}{2}\frac{4B}{n}\frac{\left(\frac{4B}{n}+1\right)}{\left({k+1}\right)^2}
+\frac{1}{3}\frac{4B}{n}\frac{\left(\frac{4B}{n}+1\right)\left(\frac{2B}{n}+1\right)}{\left({k+1}\right)^3}\nonumber\\
&&+\frac{1}{4}\frac{4B}{n}\frac{\left(\frac{4B}{n}+1\right)\left(\frac{2B}{n}+1\right)}{\left({k+1}\right)^4},\nonumber
\end{eqnarray}
we have
\begin{eqnarray*}
F_{k+1}&\leq&\left[\left(\frac{k+1}{k}\right)^{\frac{4B}{n}}-\frac{k-1}{3(2k+1)}\frac{2B}{n}\frac{\left(1+\frac{2B}{n}\right)\left(1+\frac{4B}{n}\right)}{(k+1)^3}
-\frac{B}{n}\frac{\left(1+\frac{4B}{n}\right)\left(1+\frac{2B}{n}\right)}{(k+1)^4}\right]F_k\\
&\leq&C(n,k,B)\left(\frac{k+1}{k}\right)^{\frac{4}{n}}F_k,
\end{eqnarray*}
where $C(n,k,B)=1-\frac{B}{3n}\left(\frac{k}{k+1}\right)^{\frac{4B}{n}}\frac{\left(1+\frac{2B}{n}\right)
\left(1+\frac{4B}{n}\right)}{(k+1)^3}<1.$
\end{proof}


Now we are ready to prove Corollary~\ref{coro:ratio}.
\begin{proof}[Proof of Corollary~\ref{coro:ratio}]
Set $B=\frac{1}{1-\lambda_k}.$ It suffices to show that
$$\lambda_{k+1}\leq\left(1+\frac{4B}{n}\right)k^{\frac{2B}{n}}\lambda_1.$$ By Yang type first inequality, \eqref{eq:Yang}, we have \eqref{eq:recursion}. By Theorem~\ref{recursion}, we have
$$F_k\leq C(n,k-1,B)\left(\frac{k}{k-1}\right)^{\frac{4B}{n}}F_{k-1}\leq k^{\frac{4B}{n}}F_1=\frac{2B}{n}k^{\frac{4B}{n}}\lambda_1^2.$$
By \eqref{eq:recursion-3} and noting that $(k+1)p_{k+1}=\lambda_{k+1}-\left(1+\frac{2B}{n}\right)\Lambda_k$, we have
$$\frac{\frac{2B}{n}}{1+\frac{4B}{n}}\lambda_{k+1}^2+\frac{1+\frac{2B}{n}}{1+\frac{4B}{n}}\left(\lambda_{k+1}-
\left(1+\frac{4B}{n}\right)\Lambda_k\right)^2\leq\left(1+\frac{4B}{n}\right)F_k.$$
Hence, we have
$$\lambda_{k+1}^2\leq\frac{n}{2B}\left(1+\frac{4B}{n}\right)^2F_k\leq\left(1+\frac{4B}{n}\right)^2k^{\frac{4B}{n}}\lambda_1^2.$$
\end{proof}

In the following, we adopt the idea in \cite{Ash99} to prove different versions of Yang type second inequality, Hile-Protter type inequality and Payne-Polya-Weinberger type inequality.

For any sequence $\{\lambda_i\}_{i=1}^k$ of nonnegative real numbers, we define \begin{equation}\label{def:nu}\mu_{i}=\frac{1-\lambda_i+\frac2n}{\sum_{j=1}^k(1-\lambda_j+\frac2n)},\ \forall 1\leq i\leq k.\end{equation} If $$\lambda_i\leq 1+\frac2n, \mathrm{\ for\ any\ } 1\leq i\leq k,$$ then $\{\mu_i\}_{i=1}^k$ is a probability measure supported on $k$ points $\{i\}_{i=1}^k.$ We prove another analogue to Yang's second inequality in the discrete setting following the argument of \cite{Ash99}.
\begin{thm}Let $\Omega$ be a finite subset of $\Z^n$ and $\lambda_k$
be the $k$-th eigenvalue of the Dirichlet Laplacian on $\Omega.$ Then
\begin{eqnarray}\label{eq:YangTwo}&&\lambda_{k+1}\leq\frac{1}{\sum_i(1-\lambda_i)}\times\nonumber\\&& \left[\sum_{i}\lambda_i(1-\lambda_i+\frac2n)+\left(\left(\sum_{i}\lambda_i(1-\lambda_i+\frac2n)\right)^2-\sum_i(1-\lambda_i)\sum_{i}\lambda_i^2(1-\lambda_i+\frac4n)\right)^\frac12\right].\end{eqnarray} In particular, if $\lambda_k\leq 1+\frac2n,$ then
\begin{equation*}\label{eq:YangTwo2}\lambda_{k+1}\leq A \sum_{i}\lambda_i\mu_i,
\end{equation*} where \begin{equation}\label{eq:app:2}A:=\left(1+\frac{2k}{n\sum_{i}(1-\lambda_i)}\right)\left(1+\sqrt{1-\left(1+\frac{2k}{n\sum_{i}(1-\lambda_i)}\right)^{-1}}\right),\end{equation} and $\mu_i$ is defined in \eqref{def:nu}.
\end{thm}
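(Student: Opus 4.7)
The plan is to derive this theorem directly from Yang's first inequality \eqref{eq:Yang} by rewriting it as a quadratic inequality in the variable $\lambda_{k+1}$ and then solving explicitly, and then to simplify the resulting expression to the measure-theoretic form under the extra hypothesis $\lambda_k\leq 1+\frac{2}{n}$.

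First I would expand $(\lambda_{k+1}-\lambda_i)^2(1-\lambda_i)$ and $(\lambda_{k+1}-\lambda_i)\lambda_i$, collect powers of $\lambda_{k+1}$, and bring everything to one side to rewrite \eqref{eq:Yang} as
$$P\lambda_{k+1}^2-2Q\lambda_{k+1}+R\leq 0,$$
where
$$P=\sum_i(1-\lambda_i),\quad Q=\sum_i\lambda_i\bigl(1-\lambda_i+\tfrac{2}{n}\bigr),\quad R=\sum_i\lambda_i^2\bigl(1-\lambda_i+\tfrac{4}{n}\bigr).$$
By Proposition~\ref{prop:bipartite} (applied with strict inequality since $k\leq\sharp\Omega-1$), we have $P>0$, so this quadratic inequality forces $Q^2-PR\geq 0$ and
$$\lambda_{k+1}\leq\frac{Q+\sqrt{Q^2-PR}}{P},$$
which is exactly \eqref{eq:YangTwo}.

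For the second statement, set $M=\sum_j(1-\lambda_j+\tfrac{2}{n})=P+\tfrac{2k}{n}$. Under $\lambda_k\leq 1+\tfrac{2}{n}$ the numbers $\mu_i$ defined by \eqref{def:nu} form a bona fide probability measure on $\{1,\dots,k\}$, and by construction $Q=M\sum_i\lambda_i\mu_i$ and $M/P=1+\tfrac{2k}{nP}$. In this notation the target inequality reads $\lambda_{k+1}\leq \frac{Q}{P}\bigl(1+\sqrt{1-P/M}\bigr)$, so comparing with the bound already obtained it suffices to prove $PR/Q^2\geq P/M$, i.e.\ $MR\geq Q^2$.

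This last inequality follows from two clean observations. First, since $\sum_i\lambda_i^2\geq 0$,
$$R=\sum_i\lambda_i^2\bigl(1-\lambda_i+\tfrac{2}{n}\bigr)+\frac{2}{n}\sum_i\lambda_i^2\geq M\sum_i\lambda_i^2\mu_i.$$
Second, Jensen's inequality (equivalently, Cauchy--Schwarz) applied to the probability measure $\{\mu_i\}$ with the convex function $x\mapsto x^2$ gives
$$\sum_i\lambda_i^2\mu_i\geq\Bigl(\sum_i\lambda_i\mu_i\Bigr)^2=\frac{Q^2}{M^2}.$$
Multiplying the two displays yields $R\geq Q^2/M$, as required. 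The main (minor) obstacle is just keeping the algebraic bookkeeping of the quadratic expansion straight and noting that $\mu_i\geq 0$ requires precisely the hypothesis $\lambda_k\leq 1+\tfrac{2}{n}$ so that Jensen applies; the rest of the argument is purely formal.
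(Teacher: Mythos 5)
Your proof is correct and follows essentially the same route as the paper: both rewrite \eqref{eq:Yang} as the quadratic $P\lambda_{k+1}^2-2Q\lambda_{k+1}+R\leq 0$ and take the larger root, then obtain the second assertion from the two estimates $R\geq M\sum_i\lambda_i^2\mu_i$ and Jensen's inequality $\sum_i\lambda_i^2\mu_i\geq\bigl(\sum_i\lambda_i\mu_i\bigr)^2$ for the probability measure $\{\mu_i\}$. The only (harmless) difference is packaging --- you reduce to the single inequality $MR\geq Q^2$ before substituting into the square root, whereas the paper substitutes the two bounds into the discriminant directly --- and you add the explicit (and correct) remark that $P>0$ by Proposition~\ref{prop:bipartite} since $k\leq\sharp\Omega-1$.
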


\begin{rem} If $\lambda_k\leq 1-\delta$ for some $\delta>0,$ then
$$A\leq C(n,\delta):=(1+\frac{2}{n\delta})\left(1+\sqrt{1-(1+\frac{2}{n\delta})^{-1}}\right).$$
\end{rem}

\begin{proof} For the first assertion, by Yang type first inequality, \eqref{eq:Yang},
$$f(\lambda_{k+1}):=\sum_i(1-\lambda_i) \lambda_{k+1}^2-\lambda_{k+1}\left[\sum_{i}\lambda_i(2+\frac4n-2\lambda_i)\right]+\sum_i\lambda_i^2(1+\frac4n-\lambda_i)\leq 0.$$ As a quadratic inequality in $\lambda_{k+1},$ one obtains that $\lambda_{k+1}$ is less than or equal to the larger root of $f(x),$ which yields \eqref{eq:YangTwo}.

For the second assertion, we estimate the last term in the bracket $[\cdot]$ in \eqref{eq:YangTwo} as follows,
$$\sum_{i}\lambda_i^2(1-\lambda_i+\frac4n)\geq \sum_{i}\lambda_i^2(1-\lambda_i+\frac2n),$$ and obtain
\begin{equation}\label{eq:app:1}\lambda_{k+1}\leq\frac{\frac{2k}{n}+\sum_{i}(1-\lambda_i)}{\sum_i(1-\lambda_i)} \left[\sum_{i}\lambda_i\mu_i+\left((\sum_{i}\lambda_i\mu_i)^2-\frac{\sum_i(1-\lambda_i)}{\frac{2k}{n}+\sum_{i}(1-\lambda_i)}\sum_{i}\lambda_i^2\mu_i\right)^\frac12\right].\end{equation} For $\lambda_k\leq 1+\frac{2}{n},$ $\{\mu_i\}_{i=1}^k$ is a probability measure, which implies that
$$\sum_{i}\lambda_i^2\mu_i\geq (\sum_{i}\lambda_i\mu_i)^2.$$ Plugging it into \eqref{eq:app:1}, we prove the second assertion.


\end{proof}

This result yields other versions of Hile-Protter inequality and Payne-Polya-Weinberger inequality. We omit the proofs here since they are similar to those in Theorem~\ref{thm:HP} and Theorem~\ref{thm:PPW}.
\begin{thm}\label{thm:HP2} Let $\Omega$ be a finite subset of $\Z^n$ and $\lambda_i$
be the $i$-th eigenvalue of the Dirichlet Laplacian on $\Omega.$ Suppose that $\lambda_k\leq 1+\frac2n,$ then
\begin{equation*}\label{eq:HP2}\sum_{i=1}^k\frac{\lambda_i}{\lambda_{k+1}-\lambda_i}\mu_i\geq \frac{1}{A-1},\end{equation*} where $A$ is defined in \eqref{eq:app:2} and $\mu_i$ is defined in \eqref{def:nu}.\end{thm}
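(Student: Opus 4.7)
The plan is to mimic the convexity argument of Proposition~\ref{prop:Yang2toHP}, replacing the uniform measure $\frac{1}{k}$ there by the weighted probability measure $\mu_i$ defined in \eqref{def:nu}, and then to feed in the weighted Yang-type bound \eqref{eq:YangTwo2} in place of \eqref{eq:Yang2}. The whole argument is only a few lines once the right ingredients are identified.

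First I would dispose of the trivial case: if $\lambda_{k+1}=\lambda_k$, then the left-hand side is $+\infty$ by convention and there is nothing to prove, so I may assume $\lambda_{k+1}>\lambda_k$. Next I need to verify that $\{\mu_i\}_{i=1}^k$ is a genuine probability measure. The hypothesis $\lambda_k\leq 1+\tfrac{2}{n}$ together with $\lambda_1\leq\cdots\leq\lambda_k$ makes each numerator $1-\lambda_i+\tfrac2n\geq 0$, while Proposition~\ref{prop:bipartite} gives $\sum_{i=1}^k(1-\lambda_i)\geq 0$ which forces the denominator $\sum_j(1-\lambda_j+\tfrac2n)>0$. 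Hence $\mu_i\geq 0$ and $\sum_i\mu_i=1$.

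The key analytic input is the convexity of $g(x)=x/(\lambda_{k+1}-x)$ on $(-\infty,\lambda_{k+1})$, which was already exploited in Proposition~\ref{prop:Yang2toHP}. Since every $\lambda_i\leq\lambda_k<\lambda_{k+1}$ lies in this interval, Jensen's inequality applied against $\mu_i$ yields
\begin{equation*}
\sum_{i=1}^{k}\frac{\lambda_i}{\lambda_{k+1}-\lambda_i}\,\mu_i
\;=\;\sum_{i=1}^{k} g(\lambda_i)\,\mu_i
\;\geq\; g\!\left(\sum_{i=1}^{k}\lambda_i\mu_i\right)
\;=\;\frac{\sum_{i=1}^{k}\lambda_i\mu_i}{\lambda_{k+1}-\sum_{i=1}^{k}\lambda_i\mu_i}.
\end{equation*}

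Finally I would plug in the second assertion of the preceding theorem, namely $\lambda_{k+1}\leq A\sum_i\lambda_i\mu_i$ with $A$ as in \eqref{eq:app:2}. This rewrites as $\lambda_{k+1}-\sum_i\lambda_i\mu_i\leq(A-1)\sum_i\lambda_i\mu_i$, and substituting into the denominator above gives
\begin{equation*}
\sum_{i=1}^{k}\frac{\lambda_i}{\lambda_{k+1}-\lambda_i}\,\mu_i
\;\geq\;\frac{\sum_{i=1}^{k}\lambda_i\mu_i}{(A-1)\sum_{i=1}^{k}\lambda_i\mu_i}
\;=\;\frac{1}{A-1},
\end{equation*}
which is the desired inequality. (One should check that $\sum_i\lambda_i\mu_i>0$, but this is immediate since $\lambda_1>0$ and $\mu_1>0$.) There is no real obstacle here: the only point that needs care is confirming that the hypothesis $\lambda_k\leq 1+\tfrac{2}{n}$ suffices to make $\{\mu_i\}$ a probability measure and to justify applying \eqref{eq:YangTwo2}, both of which follow directly from Proposition~\ref{prop:bipartite} and the monotonicity of the eigenvalue sequence.
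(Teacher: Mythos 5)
Your proof is correct and follows exactly the route the paper intends: the paper omits the proof of Theorem~\ref{thm:HP2}, noting only that it is derived from the weighted Yang-type second inequality $\lambda_{k+1}\leq A\sum_i\lambda_i\mu_i$ in the same way that \eqref{eq:Yang2} yields \eqref{eq:HPthm}, i.e.\ by the Jensen/convexity argument of Proposition~\ref{prop:Yang2toHP} applied with the probability measure $\{\mu_i\}$ in place of the uniform weights. Your verification that $\{\mu_i\}$ is indeed a probability measure under the hypothesis $\lambda_k\leq 1+\frac{2}{n}$ (using Proposition~\ref{prop:bipartite} for positivity of the denominator) is the only point needing care, and you handled it correctly.
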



\begin{thm}\label{thm:PPW2} Let $\Omega$ be a finite subset of $\Z^n$ and $\lambda_i$
be the $i$-th eigenvalue of the Dirichlet Laplacian on $\Omega.$ Suppose that $\lambda_k\leq 1+\frac2n,$ then
\begin{equation*}\lambda_{k+1}-\lambda_k\leq (A-1)\sum_{i=1}^k{\lambda_i}\mu_i,\end{equation*} where $A$ is defined in \eqref{eq:app:2} and $\mu_i$ is defined in \eqref{def:nu}.\end{thm}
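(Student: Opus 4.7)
The approach is to derive Theorem~\ref{thm:PPW2} directly from the Hile-Protter-type bound in Theorem~\ref{thm:HP2}, replicating the elementary bound-the-denominator argument used in the proof of Theorem~\ref{thm:PPW} from Theorem~\ref{thm:HP}. The only hypothesis in play is $\lambda_k\leq 1+\tfrac{2}{n}$, which is precisely the assumption of Theorem~\ref{thm:HP2} and which also guarantees that the weights $\mu_i$ in \eqref{def:nu} form a well-defined nonnegative system summing to $1$.

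First I would dispose of the trivial case: if $\lambda_{k+1}=\lambda_k$, then the left-hand side of the claimed inequality vanishes, while the right-hand side $(A-1)\sum_{i=1}^k\lambda_i\mu_i$ is nonnegative, since $A\geq 1$ from \eqref{eq:app:2} and each $\lambda_i\geq 0$, $\mu_i\geq 0$. So we may assume $\lambda_{k+1}>\lambda_k$, which makes every denominator $\lambda_{k+1}-\lambda_i$ strictly positive for $1\leq i\leq k$.

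The main step is the monotonicity bound
$$\frac{\lambda_i}{\lambda_{k+1}-\lambda_i}\leq \frac{\lambda_i}{\lambda_{k+1}-\lambda_k},\quad 1\leq i\leq k,$$
which is immediate from $\lambda_{k+1}-\lambda_i\geq \lambda_{k+1}-\lambda_k>0$ and $\lambda_i\geq 0$. Weighting by $\mu_i\geq 0$, summing over $i$, and invoking Theorem~\ref{thm:HP2} yields
$$\frac{1}{A-1}\leq \sum_{i=1}^k\frac{\lambda_i}{\lambda_{k+1}-\lambda_i}\mu_i\leq \frac{1}{\lambda_{k+1}-\lambda_k}\sum_{i=1}^k\lambda_i\mu_i,$$
and rearranging (using $A-1>0$, which follows from the strict inequality $1+\tfrac{2k}{n\sum_i(1-\lambda_i)}>1$ and the definition of $A$) produces the stated bound.

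There is no substantive obstacle here: all the work is carried by Theorem~\ref{thm:HP2}, and the present statement is a routine consequence obtained by the same one-line monotonicity trick that converts any Hile-Protter-type sum into a Payne-Polya-Weinberger-type gap estimate. This is why the authors remark that the proof parallels that of Theorem~\ref{thm:PPW} and is omitted.
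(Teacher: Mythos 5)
Your proof is correct and follows exactly the route the paper intends: the authors omit the proof of Theorem~\ref{thm:PPW2} but state that it parallels the proof of Theorem~\ref{thm:PPW}, which is precisely the bound $\lambda_{k+1}-\lambda_i\geq\lambda_{k+1}-\lambda_k$ applied to the Hile--Protter-type sum (here the $\mu_i$-weighted sum of Theorem~\ref{thm:HP2}). Your separate treatment of the degenerate case $\lambda_{k+1}=\lambda_k$ and the checks that $A>1$ and $\mu_i\geq 0$ under the hypothesis $\lambda_k\leq 1+\frac{2}{n}$ are appropriate and consistent with the paper's conventions.
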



{\bf Acknowledgements.}
B. H. is supported by NSFC, grant no. 11401106. Y. L. is supported by NSFC, grant no. 11671401. Y. S. is supported by NSF of Fujian Province through Grants 2017J01556, 2016J01013.



\bibliography{Dirichleteigenvalue}
\bibliographystyle{alpha}

\end{document}